\newif\ifsubmit
\newcommand{\email}{}
\renewcommand{\v}[1]{\bm{#1}}
\renewcommand{\Re}{\operatorname{Re}}
\renewcommand{\Im}{\operatorname{Im}}
\newcommand{\rebrac}[1]{\Re\left[#1\right]}
\newcommand{\conj}[1]{\bar{#1}}
\newcommand{\bdry}{{\partial\Omega}}
\newcommand{\nhat}{\hat{n}}
\newcommand{\pars}[1]{\left( #1 \right)}%
\newcommand{\braces}[1]{\left\{ #1 \right\}}%
\newcommand{\poly}{\operatorname{P}}
\newcommand{\Reps}{{\rho_\epsilon}}              % note R -> rho
\newcommand{\be}{\begin{equation}}
\newcommand{\ee}{\end{equation}}
\newcommand{\cc}{\mathbb{C}}
\newcommand{\ft}{\tilde f}        % pullback of density to t parameter
\newcommand{\bigO}{{\mathcal O}}
\newtheorem{thm}{Theorem}
\newtheorem{pro}[thm]{Proposition}
\newtheorem{rmk}[thm]{Remark}
\newtheorem{cnj}[thm]{Conjecture}
\newcommand{\stokeslet}{{\mathcal S}}
\newcommand{\doublet}{{\mathcal D}}
\newcommand{\epspan}{\epsilon}   % was  {\epsilon_{\text{panel}}} 
\title{Accurate quadrature of nearly singular line integrals in two and three dimensions by singularity swapping}
\author{Ludvig af Klinteberg\thanks{Department of Mathematics,
    Simon Fraser University, Burnaby, BC, Canada (\email{ludvigak@kth.se})}
  \and Alex H. Barnett\thanks{Center for Computational Mathematics, Flatiron Institute, NY, USA}
}
\begin{document}
\maketitle

\begin{abstract}
The method of Helsing and co-workers evaluates Laplace and related layer potentials generated by a panel (composite) quadrature on a curve,
efficiently and with high-order accuracy for arbitrarily close targets.
Since it exploits complex analysis, its use has been restricted to two dimensions (2D).
We first explain its loss of accuracy as panels become curved,
using a classical complex approximation result of Walsh
that can be interpreted as ``electrostatic shielding''
of a
%the panel's
Schwarz singularity.
We then introduce a variant that swaps
the target singularity for one at its
{\em complexified parameter preimage};
in the latter space the panel is flat, hence the
convergence rate can be much higher.
%increasing efficiency.
% accuracy?
%
The preimage is found robustly by %a simple
Newton iteration.
This idea also enables, for the first time, a near-singular quadrature
for potentials generated
by smooth curves in 3D, building on recurrences of Tornberg--Gustavsson.
We apply this to accurate evaluation of the Stokes flow near to a
%loop-like
curved filament in the slender body approximation.
Our 3D method is several times more efficient (both in terms of
kernel evaluations, and in speed in a C implementation) than
the only existing alternative, namely, adaptive integration.
\end{abstract}   %%%%%%%%%%% 189 words, bit on the long side still

\section{Introduction}

Integral equation methods enable efficient
numerical solutions to piecewise-constant coefficient elliptic boundary
value problems, by representing the solution as a layer potential
generated by a so-called ``density'' function defined on
a lower-dimensional source geometry \cite{atkinson,Kress2014}.
This work is concerned with accurate evaluation of
such layer potentials close to their source,
when this source is a
one-dimensional curve embedded in 2D or 3D space.
In 2D, this is a common occurrence: the curve
is the boundary of the computational domain or an interface
between different materials. In 3D,
such line integrals represent the fluid velocity in
non-local slender-body theory (SBT) for filaments in a viscous (Stokes) flow
\cite{keller76,johnson80,gotz00},
and also may represent the fields in the solution of
electrostatic \cite{laptoroid} or
elecromagnetic \cite{haslamwire} response of thin wire conductors.
Applications of the former
include simulation of straight \cite{Tornberg2006}, flexible
\cite{Tornberg2004,naz17} or closed-loop \cite{Mori2018}
fiber suspensions in fluids.
SBT has recently been placed on a more rigorous
footing as the small-radius
asymptotic limit of a surface boundary integral formulation
\cite{Mori2018,Mori2019,Koens2018}.
In this work we focus on non-oscillatory kernels arising from
Laplace (electrostatic) and Stokes applications,
although
we expect that by singularity splitting (e.g.\ \cite{Helsing2015})
the methods we present could be adapted for oscillatory or Yukawa kernels.

%***CLARIFY: cut down, simply motivate panels as a density representation
For numerical solution, the density function must first be discretized
\cite{sloan92,Kress2014}.
A variety of methods are used in practice.
If the geometry is smooth enough, and the
interacting objects are far enough away that
the density is also smooth, then a non-adaptive density representation
is sufficient.
Such representations
include low-order splines on uniform grids \cite{Tornberg2004},
global spectral discretization by the periodic trapezoid rule
for closed curves
\cite{Kress2014,Hao2014,Barnett2015},
and, for open curves, global Chebychev
\cite{haslamwire,naz17} or Legendre \cite{Tornberg2006}
expansions.
However, if the density has regions
where refinement is needed
(such as corners or close interactions with other bodies),
a composite (``panel'') scheme is better, as is common with
Galerkin boundary-element methods \cite{sloan92}.
On each panel a high-order representation (such as
the Lagrange basis for a set of Gauss--Legendre nodes)
is used; panels may then be split in either a graded
\cite{sloan92,Helsing2008a,ojalalap} or
adaptive \cite{Ojala2015,Rahimian2016,wu2019} fashion.

Our goal is accurate and efficient evaluation of the potential due
a density given on a panel, at arbitrarily close target points.
Not only is this
crucial for accurate solution evaluation once the density is known,
but is also key to constructing matrix elements for
a Nystr\"om or Galerkin solution \cite{sloan92,Kress2014}
in the case when curves are close to each other \cite{Helsing2008,ojalalap,Barnett2015}.
Specifically,
let the panel $\Gamma$ be an open smooth curve in $\mathbb R^d$, where $d=2$ or
$d=3$. We seek to evaluate the layer potential
\begin{align}
  u(\v x) = \int_\Gamma K(\v x, \v y) f(\v y) \dif s(\v y), \quad \v x \notin \Gamma \subset \mathbb R^d,
  \label{eq:model_laypot}
\end{align}
at the target point $\v x\notin\Gamma$. Here the density $f$
is a smooth function defined on $\Gamma$, and $K$ is a
kernel that has a singularity as $\v x \to \v y$.
Letting $k$ denote some smooth (possibly tensor-valued)
function on $\mathbb R^d$,
in two dimensions the dominant singularity can be either logarithmic,
\begin{align}
  K(\v x, \v y) \sim k(\v x-\v y)\log\abs{\v x - \v y},
\end{align}
or of power-law form
\begin{align}
  K(\v x, \v y) \sim \frac{k(\v x-\v y)}{\abs{\v x - \v y}^{m}}, \quad m=1,2, \dots,
\end{align}
where $m$ is generally even.
In 3D only the latter form arises, for $m$ generally odd.
% NOTE I cut the even/odd thing since not really true as soon as you want 1st derivatives.
% AHB: note I stand by the claim, since grad, DLP, etc, always bump up the r power by 2.
We will assume that $\Gamma$ is described by the parametrization
$\v g : \mathbb R \to \mathbb R^d$ that maps the standard interval
$[-1,1]$ to $\Gamma$,
\begin{align}
  \Gamma = \left\{ \v g(t) \mid t \in [-1,1] \right\}.
\end{align}
Thus the parametric form of the desired integral \eqref{eq:model_laypot} is
\begin{align}
  u(\v x) = \int_{-1}^1 K\left(\v x, \v g(t)\right) \ft(t) \abs{\v g'(t)} \dif t, \quad \v x \notin \Gamma \subset \mathbb R^d,
  \label{eq:model_laypot_param}
\end{align}
where $\ft(t) := f(\v g(t))$ is the pullback of the density to parameter space.
%we will abuse notation by writing $f(t) = f(\v g(t))$; the argument will be clear by context.

The difficulty of evaluation of a layer potential with given density
is essentially controlled by the distance of the target point $\v x$ from the curve. 
In particular, if $\v x$ is far from $\Gamma$, the
kernel $K(\v x,\v y)$ varies, as a function of $\v y$,
no more rapidly than the curve geometry itself.
Thus a fixed quadrature rule, once it integrates accurately
the product of density $f$ and any geometry functions
(such as the ``speed'' $\abs{\v g'}$),
is accurate for all such targets.
Here, ``far'' can be quantified
relative to the local quadrature node spacing $h$ and desired accuracy:
e.g.\ in the 2D periodic trapezoid rule setting
the distance should be around $(\log 10)h/2\pi\approx 0.37\,h$ times
the desired number of correct digits (a slight extension of
\cite[Remark~2.6]{Barnett2014}).
However, as $\v x$ approaches $\Gamma$, $K(\v x,\v y)$ becomes increasingly non-smooth
in the region where $\v y$ is near $\v x$,
and any fixed quadrature scheme loses accuracy.
This is often referred to as the layer potential integral
being {\em nearly singular}.

There are several approaches to handling the nearly singular case.
The most crude is to increase the number of nodes used
to discretize the density; this has the obvious disadvantage of
growing the linear system size beyond that needed
to represent the density, wasting computation time and memory.
Slightly less naive is to preserve the original discretization nodes,
but interpolate from these nodes onto a refined quadrature scheme (with higher order %\cite{Barnett2014},
or
subdivided panels \cite{Tornberg2004}) used only for potential evaluations.
However, to handle arbitrarily close targets
one has to refine the panels in an adaptive
fashion, {\em afresh for each target point}.
As a target approaches $\Gamma$ an increasing level of adaptive refinement is demanded.
The large number of conditionals and interpolations needed in such a scheme
can make it slow.

This has led to more sophisticated ``special'' rules which
do not suffer as the target approaches the curve. They
exploit properties of the PDE, or, in 2D, the relation of
harmonic to analytic functions.
In the 2D high-order setting, several important
contributions have been made in the last decade,
including
panel-based kernel-split quadrature \cite{Helsing2008,Helsing2009,ojalalap,Helsing2015,helsingjiang}, globally
compensated spectral quadrature \cite{Helsing2008,Barnett2015}, quadrature
by expansion (QBX) \cite{Barnett2014,Klockner2013,walaqbx2d},
harmonic density interpolation \cite{perezlap2d3d},
and asymptotic expansion \cite{khatri2d}.
In 3D, little work has been done on high-order accurate evaluation of
nearly singular line integrals. Many of the applications are in the
context of active or passive filaments in viscous fluids, but the
quadrature methods are limited to either using analytical formulae for
straight segments \cite{Tornberg2006}, or regularizing the integrand
using an artificial length scale \cite{Cortez2018,Ho2019,Tornberg2004}.

In this paper, we propose a new panel-based quadrature method for
nearly singular line integrals in both 2D and 3D.
The method incurs no loss of efficiency for targets arbitrarily close
to the curve.
(Note that, in the 2D case it is usual for one-sided limits on $\Gamma$
to exist, but in 3D limits on $\Gamma$ do not generally exist.)
%\todo[inline]{COMMENT MORE - does on-curve work when limit exists?}
%
Our method builds on the panel-based monomial recurrences
introduced by Helsing--Ojala
\cite{Helsing2008} and extended by Helsing and co-workers
\cite{Helsing2009,helsing_axi,helsing_helm,ojalalap}
\cite[Sec.~6]{helsingjiang}, with a key difference that,
instead of interpolating in the physical plane (which is associated with
the complex plane),
we interpolate in the (real) parametrization variable of the panel.
Rather than exploiting Cauchy's theorem in the spatial complex plane,
the singularity is ``swapped'' by cancellation for one in the
parameter plane; this requires a nearby root-finding search for
the (analytic continuation of the)
function describing the distance between the panel and the target.
It builds on recent methods for computing quadrature error estimates in two
dimensions \cite{AfKlinteberg2018}.
We include a robust and efficient method for such root searches.

This new formulation brings two major advantages:
\begin{enumerate}
\item The rather severe loss of accuracy of the Helsing--Ojala method for curved panels---which we quantify for the first time via classical complex approximation
theory---is much ameliorated.
As we demonstrate, more highly curved panels may be handled with the same number
of nodes, leading to higher efficiency.
\item The method extends to arbitrary smooth curves in 3D. Remarkably,
the search for a root in the complex plane is unchanged from the 2D case.
\end{enumerate}
As a side benefit, our method
automatically generates the information required to determine whether
it is needed, or whether plain quadrature
using the existing density nodes is adequate, using results in
\cite{AfKlinteberg2016quad,AfKlinteberg2018}.

The structure of this paper is as follows.
\Cref{sec:two-dimensions} deals with the 2D
problem: we review the error in the plain (direct) quadrature
(\cref{s:direct}),
review the complex interpolatory method of Helsing--Ojala
(\cref{s:helsing}),
then explain its loss of accuracy for curved panels
(\cref{s:walsh}).
We then introduce and numerically demonstrate
the new ``singularity swap'' method
(\cref{s:real}).
\Cref{sec:three-dimensions} extends the method to
deal with problems in 3D.
\Cref{sec:summary-algorithm} summarizes the entire proposed algorithm,
unifying the 2D and 3D cases.
\Cref{sec:numerical-results} presents further numerical tests
of accuracy in 2D, and accuracy
and efficiency in a 3D Stokes slender body application,
relative to standard adaptive quadrature.
We conclude in \cref{s:conc}.

%\paragraph{Notation} Throughout this paper, we will generally omit the
%limits on sets such as $\braces{f_k}_{k=1}^n$, writing them as
%$\braces{f_k}$ whenever their size should be obvious from the context.

% 222222222222222222222222222222222222222222222222222222222222222222222222222
\section{Two dimensions}
\label{sec:two-dimensions}

Section~\ref{s:direct} reviews the convergence rate for
direct evaluation of the potential, i.e.\ using the native quadrature scheme;
this also serves to introduce complex notation and motivate special quadratures.
Section~\ref{s:helsing}
summarizes the complex monomial interpolatory quadrature of Helsing--Ojala
\cite{Helsing2008,Helsing2009} for evaluation of a given density close to its 2D source panel.
In Section~\ref{s:walsh} we explain and quantify
the loss of accuracy due to panel bending.
Finally in Section~\ref{s:real} we
use the ``singularity swap'' to make a real monomial version that
is less sensitive to bending, and which will form the basis of the 3D method.

\subsection{Summary of the direct (native) evaluation error near a panel}
\label{s:direct}

Here we review known results about the approximation of the integral
\eqref{eq:model_laypot_param}
directly using a fixed high-order quadrature rule.
In the analytic panel case this is best understood by analytic continuation
in the parameter.
Let $t_j$ be the nodes and $w_j$ the weights for an $n$-node
Gauss--Legendre scheme on $[-1,1]$, at which we assume the density is known.%
\footnote{We choose this rule since it is the most common;
however, any other high-order rule with an asymptotic Chebychev
density on $[-1,1]$ would behave similarly.}
%good for uniformly accurate interpolation \cite{tref}.
Substituting the rule into \eqref{eq:model_laypot_param} gives
the direct approximation
\be
u(\v x) \approx \sum_{j=1}^n W_j K(\v x, \v y_j) f_j~,
\label{direct}
\ee
where the nodes are $\v y_j = \v g(t_j)$, the density samples
$f_j = f(\v y_j)$,
and the modified weights $W_j = w_j|\v g'(t_j)|$.
%
% *** not sure if W_j or y_j ever needed again - but why not? Alex
%
For any integrand $F(t)$ analytic in a complex neighborhood of $[-1,1]$,
the error
\be
R_n[F] := \sum_{j=1}^n w_j F(t_j) - \int_{-1}^1 F(t) dt
\label{Eng}
\ee
vanishes exponentially with $n$, with a rate which grows
with the size of this neighborhood.
Specifically, define $E_\rho$, the Bernstein $\rho$-ellipse, as the image
of the disc $|z|<\rho$ under the Joukowski map
\be
t = \frac{z+z^{-1}}{2}~.
\label{jouk}
\ee
(For example, \cref{f:complex}(a) shows this ellipse for
$\rho\approx 1.397$.)
Then, if $F$ is analytic and bounded in $E_\rho$, there is a constant $C$ depending only on $\rho$ and $F$ such that
\be
|R_n[F]| \;\le\; C \rho^{-2n}, \qquad \mbox{ for all } n=1,2,\ldots
\label{GLexp}
\ee
An explicit bound
\cite[Thm.~19.3]{Trefethen2013}
is $C = [64/15(\rho^2-1)] \sup_{t\in E_\rho} |F(t)|$.

Returning to \eqref{eq:model_laypot_param}, the integrand of interest
is $F(t) = K(\v x,\v g(t))\ft(t)|\v g'(t)|$,
for which we seek the largest $\rho$ such that $F$ is
analytic in $E_\rho$.
Bringing $\v x$ near $\Gamma$
induces a singularity near $[-1,1]$ in the
analytic continuation in $t$ of $K(\v x,\v g(t))$, a claim justified
as follows.
Each of the kernels $K(\v x,\v g(t))$ under consideration has
a singularity when the squared distance
\be
R(t)^2 := |\v x - \v g(t)|^2 = (x_1-g_1(t))^2 + (x_2-g_2(t))^2
\label{babyR2}
\ee
goes to zero, which gives $x_1-g_1(t) = \pm i(x_2-g_2(t))$.
Taking the negative case, the singularity (denoted by $t_0$) thus obeys
\be
x_1 + ix_2 = g_1(t_0) +ig_2(t_0)~,
\label{croot}
\ee
and one may check by Schwarz reflection
that the positive case gives its conjugate $\overline{t_0}$.
This suggests identifying
$\mathbb R^2$ with $\cc$ and introducing
\begin{align}
  \zeta &= x_1 + ix_2~, \qquad\qquad \mbox{(target point)}\\
  \tau &= y_1 + iy_2~, \qquad\qquad\, \mbox{(source point on $\Gamma$)}\\
  \gamma(t) &= g_1(t) + i g_2(t)~, \qquad \mbox{(complex parameterization of $\Gamma$)}.
  \label{complexi}
\end{align}
% *** is it obvious that gamma is now locally analytic as C->C ?
Thus, in this complex notation, the equation \eqref{croot} for $t_0$
is written
\be
\zeta = \gamma(t_0)~.
\label{crootc}
\ee
For now we assume that $\gamma:\cc \to \cc$ is analytic in a sufficiently
large complex neighborhood of $[-1,1]$,
so the nearest singularity is $t_0=\gamma^{-1}(\zeta)$,
the preimage of $\zeta$ under $\gamma$;
see upper and lower panels of \cref{f:complex}(a).
Thus our claim is proved.

Then inverting \eqref{jouk} gives the
elliptical radius $\rho$ of the Bernstein ellipse on which any $t\in\cc$ lies,
\be
  \rho(t) := |t \pm \sqrt{t^2-1}|~, \qquad \mbox{ with sign chosen such that $\rho > 1$~.}
  \label{eq:bernstein_radius}
\ee
We then have, for any of the kernels under study,
that \eqref{GLexp} holds for any
% exponential rate
$\rho<\rho(\gamma^{-1}(\zeta))$.
Thus images of the Bernstein ellipses, $\gamma(E_\rho)$, control
contours of equal exponential convergence rate, and thus
(assuming that the prefactor $C$ does not vary much with the target point), at
fixed $n$, also the contours of equal error magnitude.
A more detailed analysis for the Laplace double-layer
case in \cite{AfKlinteberg2016quad} \cite[Sec.~3.1]{AfKlinteberg2018}
improves%
\footnote{In that work the tool was instead the asymptotics of the
characteristic
remainder function of Donaldson--Elliott \cite{Donaldson1972},
and $\rho(t_0)$ was left in the form \eqref{eq:bernstein_radius}.}
the rate to $\rho=\rho(\gamma^{-1}(\zeta))$, predicts the constant,
and verifies that the error contours very closely follow this prediction.
Indeed, the lower panel of our
\cref{f:complex}(a) illustrates, for an analytic
choice of density $f$, that the contour of constant error 
(modulo oscillatory ``fingering'' \cite{AfKlinteberg2018})
well matches the curve $\gamma(E_\rho)$ passing through the target $\zeta$.
The top-left plots in
Figures~\ref{fig:compare_2d_16} and \ref{fig:compare_2d_32}
show the shrinkage of these images of Bernstein ellipses as $n$ grows.

For kernels other than the Laplace double-layer, error results are similar
\cite{Klockner2013,AfKlinteberg2016quad}.
% *** give refs?
This quantified breakdown in accuracy as the target approaches $\Gamma$
motivates the special quadratures to which we now turn.
%which are the topic of the rest of this paper.

\subsection{Interpolatory quadrature using a complex monomial basis}
\label{s:helsing}

We now review the special quadrature of Helsing and coworkers
\cite{Helsing2008,Helsing2009,ojalalap}
that approximates \eqref{eq:model_laypot} in the 2D case.
In the complex notation of \eqref{complexi},
given a smooth density defined on $\Gamma$,
for all standard PDEs of interest (Laplace, Helmholtz, Stokes, etc),
the integrals that we need to evaluate can be written as the contour integrals
\begin{align}
  I_L &= I_L(\zeta) := \int_\Gamma f(\tau)\log(\tau-\zeta)\dif\tau,
  \label{eq:complex_L}\\
  I_m &= I_m(\zeta) :=
  \int_\Gamma \frac{f(\tau)}{(\tau-\zeta)^m} \dif\tau, \qquad m=1,2,\dots .
        \label{eq:complex_m}
\end{align}
\Cref{sec:compl-vari-kern} reviews how to rewrite
some boundary integral kernels as such contour integrals; note that
$f$ may include other smooth factors than merely the density.
Other 2nd-order elliptic 2D PDE kernels may be split into terms of the above
form, as pioneered by Helsing and others in the Helmholtz
\cite{helsing_helm}, axisymmetric Helmholtz
\cite{helsing_axi}, elastostatics \cite[Sec.~9]{Helsing2009},
Stokes \cite{Ojala2015}, and (modified)
biharmonic \cite[Sec.~6]{helsingjiang} cases.

An innovation of the method was to
interpolate the density in the complex coordinate $\tau$ rather
than the parameter $t$. Thus,
\begin{align}
  f(\tau) \approx \sum_{k=1}^n c_k \tau^{k-1}~, \qquad \tau \in \Gamma~.
  \label{monom}
\end{align}
This is most numerically stable if for now we assume that $\Gamma$
has been transformed by rotation and scaling to connect the points $\pm1$.
Here the (complex) images of the quadrature nodes are
$\{\tau_j\}_{j=1}^n := \{\gamma(t_j)\}_{j=1}^n$.
The monomial coefficients $c_k$ can be found by collocation at these nodes,
as follows.
Let $\v c$ be the column vector with entries $\{c_k\}$,
$\v f$ be the vector of values $\{f(\tau_j)\}$,
and $A$ be the Vandermonde matrix with entries
$A_{ij}=\tau_i^{j-1}$, for $i,j=1,\dots,n$.
Then enforcing \eqref{monom} at the nodes results in the $n\times n$ linear system
\begin{align}
  A \v c = \v f
  \label{eq:interp_vander}
\end{align}
which can be solved for the vector $\v c$.
Note that working in this {\em monomial} basis, even in the case
of $\tau_j$ on the real axis, is successful
even up to quite high orders, although
this is rarely stated in textbooks.
%Unlike barycentric interpolation \cite{Berrut2004},
%for high enough degree $n-1$ it becomes unstable.

\begin{rmk}[Backward stability]\label{r:vandcond}  % rrrrrrrrrrrrrrrrrrrrrrr
It is known for $\{\tau_j\}$ being any set of real or complex nodes
that is not close to the roots of unity
that the condition number of the Vandermonde matrix $A$ in \eqref{eq:interp_vander}
must grow at least exponentially in $n$
\cite{gautschi88,pan16}.
However, as pointed out in \cite[App.~A]{Helsing2008},
extreme ill-conditioning in itself introduces no loss of interpolation accuracy,
at least for $n<50$.
Specifically, as long as a vector $\v c$ has
small relative residual $\|A\v c - \v f\|/\|\v f\|$,
then the resulting monomial is close to the data at the nodes.
Assuming weak growth in the Lebesgue constant with $n$,
the 
%Since the Lebesgue constant for nodes with Chebychev density grows only weakly with $n$,
interpolant is thus uniformly accurate.
Such a small residual norm, if it exists for the given right-hand side,
is found by a solver that is backward stable.
We recommend either partially pivoted Gaussian elimination
(e.g.\ as implemented by {\tt mldivide} in MATLAB)
which is $\mathcal O(n^3)$, or
the Bj\"orck-Pereyra algorithm \cite{Bjorck1970}
which is $\mathcal O(n^2)$.
\end{rmk}    % rrrrrrrrrrrrrrrrrrrrrrrrrrrrrrrrrrrrrrrrrrrrrrrrrrrrrrrrrrrrr

% The computational cost
%is $\mathcal O(n^3)$ if a direct solver is used, or $\mathcal O(n^2)$
%if the Bj\"orck-Pereyra algorithm is used \cite{Bjorck1970}.

As with any interpolatory quadrature, once the
coefficients $\{c_k\}$ have been computed, the integrals
\eqref{eq:complex_L} and \eqref{eq:complex_m} can be approximated as
\begin{align}
  I_L &\approx \sum_{k=1}^n c_k q_k(\zeta)~, \label{ILa}\\
  I_m &\approx \sum_{k=1}^n c_k p_k^m(\zeta)~, \label{Ima}
\end{align}
where the values $\{q_k\}$ and $\{p_k^m\}$ are the exact integrals of the monomial densities
\begin{align}
  q_k(\zeta) &= \int_\Gamma \tau^{k-1} \log (\tau - \zeta) \dif\tau~, \qquad k=1,\dots,n
  \label{qk}
  \\
  p_k^m(\zeta) &= \int_\Gamma \frac{\tau^{k-1}}{(\tau - \zeta)^m} \dif\tau~,
  \qquad k=1,\dots,n~.
      \label{pkm}
\end{align}
The benefit of this (somewhat unusual) complex monomial basis is that these
integrals can be efficiently evaluated through {\em recursion formulae},
as follows.

\subsubsection{Exact evaluation of $p_k^m$ and $q_k$ by recurrence}
\label{s:recur}

Recall that $\Gamma$ has been transformed by rotation and scaling to
connect the points $\pm1$.
We first review the Cauchy kernel $m=1$ case
\cite[Sec.~5.1]{Helsing2008}.
The contour integral for $k=1$ exploits independence of the path $\Gamma$:
\be
p_1^1(\zeta) = \int_\Gamma \frac{\dif \tau}{\tau-\zeta} = \log(1-\zeta)- \log(-1-\zeta)
\pm2\pi i \ {\cal N}_\zeta~.
\label{p11}
\ee
Here ${\cal N}_\zeta \in\mathbb{Z}$ is a winding number that, for the standard branch
cut of the logarithm, is zero if $\zeta$ is outside the domain enclosed
by the oriented closed curve given by
$\Gamma$ traversed forwards plus $[-1,1]$ traversed backwards,
and  ${\cal N}_\zeta= +1$ ($-1$) when $\zeta$ is inside a region enclosed counterclockwise
(clockwise) \cite[Sec.~7]{Helsing2008}.
% check code
The following 2-term recurrence is derived by
adding and subtracting $\zeta \tau^{k-1}$
from the numerator of the formula \eqref{pkm}:
\begin{align}
  p_{k+1}^1(\zeta) &= \zeta p_k^1(\zeta) + \frac{1-(-1)^k}{k}, \qquad  k>1~. \label{precur1}
\end{align}
One can then recur upwards to get $p^1_m$ for all $k$.
For $m>1$ we get analogously,
\begin{align}
  p_1^m(\zeta) &= \frac{(1-\zeta)^{1-m} - (-1-\zeta)^{1-m}}{1-m}, \label{p1m}\\
  p_{k+1}^m(\zeta) &= \zeta p_k^m(\zeta) + p_k^{m-1}(\zeta), && k>1~. \label{precurm}
\end{align}
Thus once all values for $m-1$ are known, they can be found for $m$ by
upwards recurrence.  Note that in \cite[Sec.~2.2]{Helsing2009}, a different
recursion was used for the case $m=2$.  Finally, recursive computation
of the complex logarithmic kernel follows directly, following
\cite[Sec.~2.3]{Helsing2009},
\begin{align}
  q_k(\zeta) &= \frac{1}{k} \pars{ \log(1-\zeta) - (-1)^k \log(-1-\zeta) - p_{k+1}^1(\zeta) }~.
  \label{qkz}
\end{align}
We emphasize that these simple exact path-independent formulae are enabled by the choice of the complex monomial basis in the coordinate $\tau$.

\begin{rmk}[Transforming for general panel endpoints]    % rrrrrrrrrrrrrrr
\label{r:endpoints}
%The above formulae apply only when $\Gamma$ starts at $-1$ and ends at $+1$.
We now review the changes that are introduced
to the answers $I_L$ and $I_m$ when rescaling and rotating
a general panel $\Gamma$ parameterized by $\gamma(t)$, $t\in[-1,1]$,
into the required one which connects $\pm 1$,
and similarly transforming the target.
Define the complex scale factor $s_0:=(\gamma(1)-\gamma(-1))/2$
and origin $\tau_0 = (\gamma(1)+\gamma(-1))/2$.
Then the affine map from a given location $\tau$ to the transformed
location $\tilde\tau$, given by
\be
\tilde\tau = s(\tau) := \frac{\tau -\tau_0}{s_0}~,
\label{endpointmap}
\ee
can be applied to all points on the given panel and to the target
to give a transformed panel $\tilde\Gamma$ and target $\tilde\zeta$.
From this, applying the formulae above one can evaluate $\tilde I_L$ and
$\tilde I_m$.
By inserting the change of variables one then gets the desired potentials
due to the density on $\Gamma$ at target $\zeta$,
$$
I_L = s_0 \tilde I_L + (\log s_0)\int_\Gamma f(\tau) \dif \tau
~, \hspace{10ex} \mbox{ and } \qquad
I_m = s_0^{1-m} \tilde I_m~, \quad m=1,2,\dots
$$
Here a new integral is needed (the total ``charge''), easily approximated
by $\sum_{j=1}^n w_j |\gamma'(t_j)| \, f(\tau_j)$, where
$w_j$ are the Gauss--Legendre weights for $[-1,1]$.
\end{rmk}   % rrrrrrrrrrrrrrrrrrrrrrrrrrrrrrrrrrrrrrrrrrrrrrrrrrrrrrrrrrrrr

\subsubsection{Adjoint method for weights applying to arbitrary densities}
\label{s:adjoint}

The above procedure computes $I_L$ and $I_m$
of \eqref{eq:complex_L}--\eqref{eq:complex_m} for a specific function
$f$ (such as the density) on $\Gamma$, using its samples $\v f$ on the nodes
to solve first for a coefficient vector $\v c$.
In practice it is more useful instead to compute the weight vectors
$\v \lambda^L := \{\lambda_j^L\}_{j=1}^n$ and
$\v \lambda^m := \{\lambda_j^m\}_{j=1}^n$ which,
for {\em any} vector $\v f$,
approximate $I_L$ and $I_m$ when their inner products are taken against $\v f$.
That is, $I_L \approx \v f^T \v \lambda^L$ and
$I_m \approx \v f^T \v \lambda^m$, where $T$ indicates non-conjugate transpose.
One application is for
filling near-diagonal blocks of the Nystr\"om matrix arising in a boundary integral formulation:
$(\v \lambda^L)^T$ and $(\v \lambda^m)^T$ form row blocks of the matrix.

We review a result presented in \cite[Eq.~(51)]{Helsing2008}
(where a faster numerical method was also given in the $m=1$ case).
Let $\v \lambda$ be either $\v \lambda^L$ or $\v \lambda^m$, let $I$ be
the corresponding integral \eqref{eq:complex_L} or \eqref{eq:complex_m},
and let $\v p$ be the corresponding vector $\{q_k\}$ or $\{p_k^m\}$
as computed as in Section~\ref{s:recur}.
Writing our requirement for $\v \lambda$,
we insert \eqref{eq:interp_vander} to get
\be
I \approx \v f^T \v \lambda = \v c^T A^T \v \lambda = \v c^T \v p~,
\ee
where the last form simply expresses \eqref{ILa} or \eqref{Ima}.
But this last equality must hold for all $\v c \in \mathbb{C}^n$,
thus
\be
A^T \v \lambda = \v p~,
\label{adjsys}
\ee
which is a linear system whose solution is the weight vector $\v \lambda$.
Notice that this is an adjoint method \cite{adjoint}.

\begin{figure}[ht] % fffffffffffffffffffffffffffffffffffffffffffffffffffffffff
\mbox{%m
\mbox{}
\hspace{-3ex}
\raisebox{-1in}{
\includegraphics[width=2.1in]{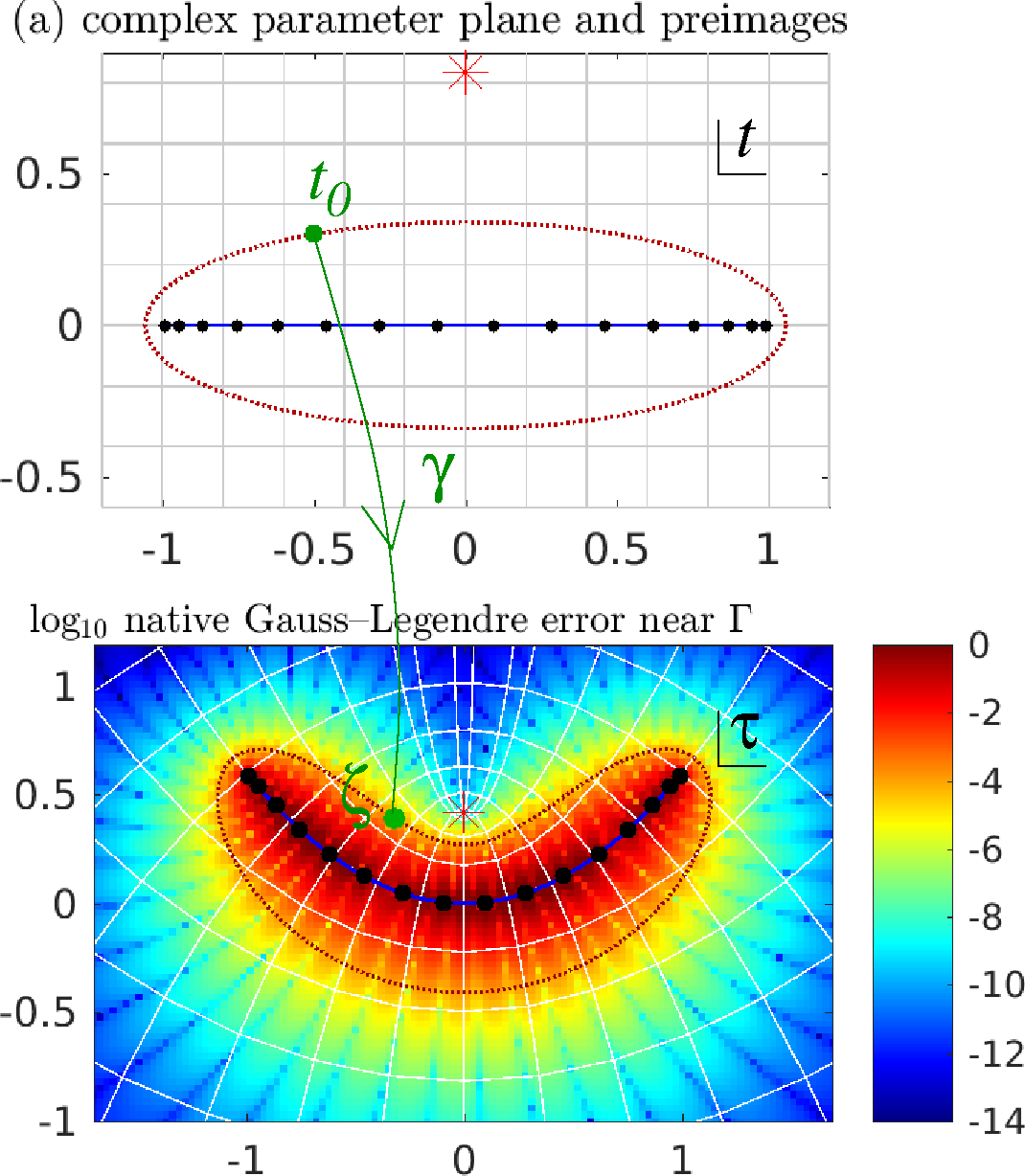}}
\hspace{-2ex}
\raisebox{-.7in}{
\includegraphics[width=1.7in]{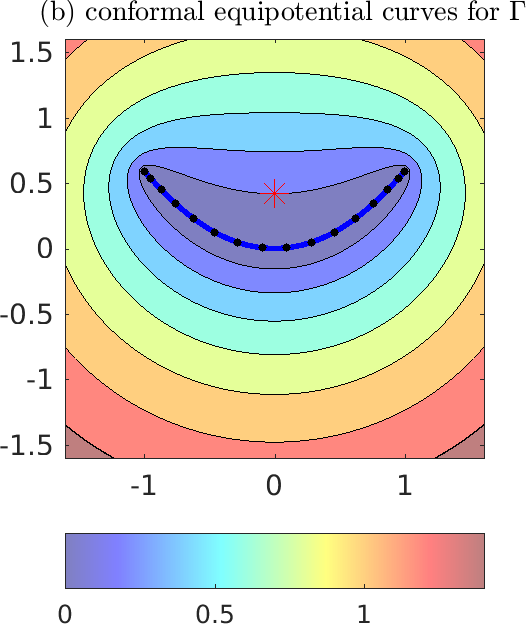}}
\hspace{-2ex}
\raisebox{-.8in}{
\includegraphics[width=2in]{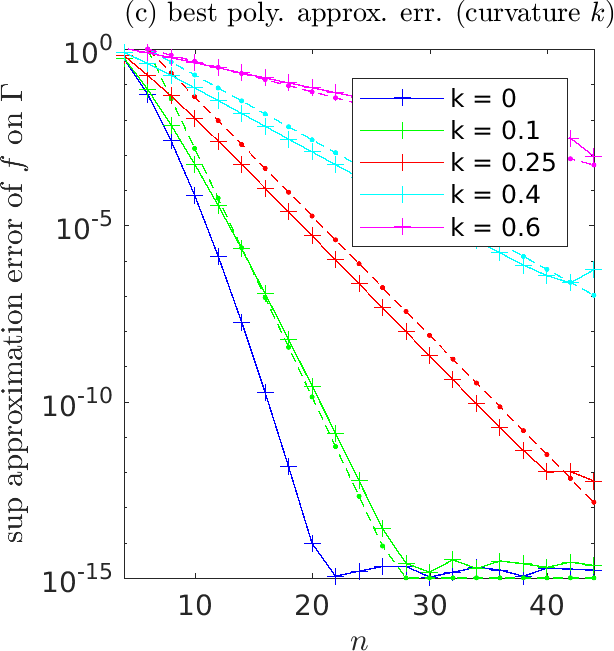}}
}%m
\caption{Native and complex-interpolatory evaluation errors for a 
simple parabolic panel $\Gamma$, for $I_1$ (complex double-layer) with
density $\ft(t) = \sin(1+3t)$ as a function of parameter $t$.
(a) $t$-plane preimage $[-1,1]$ (top plot) mapping to panel $\Gamma$
(bottom plot)
via $\tau = \gamma(t) = t + ikt^2$ with curvature $k=0.6$, hence minimum
radius of curvature $R=1/2k \approx 0.83$.
Also shown are: $n=16$ Gauss--Legendre points and (bottom with colormap)
their error in potential evaluation using the native (direct) scheme;
an example target $\zeta$ and its preimage $t_0=\gamma^{-1}(\zeta)$ (large green dots);
Bernstein ellipse for $t_0$ and its image (dotted red);
Schwarz singularity $\tau_\ast=i/4k$ and its preimage $t_\ast=i/2k$
(red $\ast$'s); and
Cartesian grid lines (grey) in the $t$-plane and their images (white).
(b) Equipotential curves of $g(z)$ for the same panel $\Gamma$,
showing ``shielding'' of the Schwarz singularity (red $\ast$) thus small
$g\approx0.2$ and $\rho\approx 1.22$.
Contrast $\rho\approx 2.14$ for the Schwarz preimage in the $t$-plane.
(c) Convergence in $n$ of
polynomial approximation error of $f$ over $\Gamma$
(solid lines) for several curvatures $k$, compared to
$C\rho^{-n}$ (dotted) predicted by \cref{s:walsh}.
\label{f:complex}
}
\end{figure}

\subsection{Effect of panel curvature on Helsing--Ojala convergence rate}
\label{s:walsh}

The Helsing--Ojala method reviewed above
is easy to implement, computationally cheap, and can
achieve close to full numerical precision for target points very close to
$\Gamma$. However, there is a severe loss of accuracy as $\Gamma$ becomes curved.
%even in cases where the geometry or density are both accurately represented.
This is striking in the central column of plots in \cref{fig:compare_2d_16}.
This observation led Helsing--Ojala \cite{Helsing2008} and later users
to recommend upsampling from a baseline $n=16$
to a larger $n$, such as $n=32$, {even though the
density and geometry were already accurately resolved} at $n=16$.
The ``HO'' $k=0.4$ plot in \cref{fig:compare_2d_32} shows that
even using $n=32$ has loss of accuracy near a moderately curved panel.
This forces one to split panels beyond what is needed for the density
representation, just to achieve good evaluation accuracy,
increasing the cost.
%Both upsampling and splitting lead to increased numerical cost.

The error in this method is essentially entirely due to the error of the
polynomial approximation \eqref{monom} of $f$ on $\Gamma$,
because each monomial is integrated exactly (up to rounding error)
as in Section~\ref{s:recur}.
For this error there is a classical result that the best polynomial
approximation converges geometrically in the degree, with rate
controlled by the largest {\em equipotential curve} of $\Gamma$
in which $f$ is analytic.
Namely,
given the set $\Gamma$, let $g$ solve the potential problem in $\mathbb{R}^2$
(which we identify with $\cc$),
\begin{align}
\Delta g &=0\qquad \mbox{ in } \mathbb{R}^2\backslash\Gamma
\nonumber\\
g&=0 \qquad \mbox{ on }\Gamma
\nonumber\\
g(\v x)&\sim \log |\v x| \qquad \mbox{ as } |\v x| \to \infty~.
\nonumber
\end{align}
For each $\rho>1$, define%
\footnote{Note that the reuse of the radius symbol $\rho$ from \cref{s:direct} is deliberate,
since in the special case $\Gamma=[-1,1]$, $C_\rho$ is the
boundary of the Bernstein ellipse $E_\rho$.}
the level curve $C_\rho:=\{\tau\in\cc: g(\tau)=\log\rho\}$.
An example $g$ and its equipotential curves are shown in \cref{f:complex}(b); note that they are very different from the Bernstein ellipse images in the lower
plot of \cref{f:complex}(a).
\begin{thm}[Walsh {\cite[\S4.5]{walsh35}}]  % tttttttttttttttttt
Let $\Gamma\subset\cc$ be a set whose complement
(including the point at infinity) is simply connected.
Let $f:\Gamma\to\cc$ extend analytically to a function analytic inside and on $C_\rho$, for some $\rho>1$.
Then there is a sequence of polynomials
$p_n(z)$ of degree $n=0,1,\ldots$ such that
$$
|p_n(z) - f(z)| \;\le\; C \rho^{-n}\qquad z\in\Gamma, \mbox{ for all } n=0,1,\ldots
$$
where $C$ is a constant independent of $n$ and $z$.
\end{thm}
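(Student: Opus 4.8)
The plan is to derive Walsh's theorem from the classical machinery of potential theory on the complement of $\Gamma$, using the conformal map furnished by the Green's function $g$. First I would invoke the Riemann mapping theorem in the following form: since the complement $\Omega := (\cc\cup\{\infty\})\setminus\Gamma$ is simply connected and contains $\infty$, there is a conformal bijection $\Phi:\Omega\to\{|w|>1\}$ normalized so that $\Phi(\infty)=\infty$ with positive derivative there; then $g(\tau)=\log|\Phi(\tau)|$ is exactly the Green's function with pole at infinity described in the statement, and the level curve $C_\rho$ is the preimage under $\Phi$ of the circle $|w|=\rho$. Let $\Psi=\Phi^{-1}$ be the inverse map, which extends conformally to $|w|>1$ and, by our hypothesis that $f$ is analytic inside and on $C_\rho$, the composition $F(w):=f(\Psi(w))$ is analytic on an annulus $1\le |w|\le \rho$ (in fact on a slightly larger one, since $f$ is analytic \emph{on} $C_\rho$).

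The second step is the construction of the approximating polynomials via a Faber-type expansion. I would introduce the Faber polynomials $\{\Phi_n\}$ associated to $\Gamma$: $\Phi_n$ is the polynomial part (principal part at $\infty$) of $\Phi(\tau)^n$, so that $\Phi_n(\tau) = \Phi(\tau)^n + O(|\Phi(\tau)|^{-1})$ as $\tau\to\infty$, and on $C_\sigma$ one has the sharp bound $|\Phi_n(\tau)| \le C\sigma^n$ for every $\sigma>1$. Writing the Laurent/Faber coefficients $a_n = \frac{1}{2\pi i}\oint_{|w|=r} \frac{F(w)}{w^{n+1}}\,dw$ for any $1<r<\rho$, the analyticity of $F$ on $|w|\le\rho$ gives the decay $|a_n|\le M r^{-n}$, and letting $r\uparrow\rho$ yields $|a_n| = O(\rho^{-n+\varepsilon})$ for every $\varepsilon>0$; with a touch more care (using analyticity slightly beyond $C_\rho$) one gets $|a_n|\le M'\rho^{-n}$ outright. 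The candidate approximant is then the partial sum $p_n(\tau) := \sum_{j=0}^n a_j \Phi_j(\tau)$, which is a polynomial of degree $n$ because $\deg\Phi_j = j$.

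The third step estimates the error on $\Gamma$. On $\Gamma$ we have $|\Phi|=1$, so $|\Phi_j(\tau)|\le C$ uniformly for $\tau\in\Gamma$, and the tail of the series is controlled by
\begin{align}
|f(z) - p_n(z)| = \left| \sum_{j=n+1}^\infty a_j \Phi_j(z) \right| \le C \sum_{j=n+1}^\infty |a_j| \le C M' \sum_{j=n+1}^\infty \rho^{-j} = \frac{C M'}{\rho-1}\,\rho^{-n},
\end{align}
valid for all $z\in\Gamma$, which is precisely the claimed bound with a constant independent of $n$ and $z$. One must also check that the Faber series of $f$ actually converges to $f$ on $\Gamma$ (not merely to some analytic function): this follows from the fact that the Faber operator is a bounded projection in this setting and that $f$ is analytic on a neighborhood of $\overline{\Omega_\rho^c}$, where $\Omega_\rho^c$ is the region bounded by $C_\rho$; alternatively one reproduces $f$ via the Cauchy integral over $C_\rho$ and expands the kernel $\frac{1}{\tau'-\tau}$ in Faber polynomials, which is the route Walsh himself takes.

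The main obstacle, and the part requiring genuine care rather than bookkeeping, is the regularity of the conformal map $\Phi$ and the Faber polynomials \emph{up to and across} $\Gamma$ — i.e., justifying the uniform bound $|\Phi_j(\tau)|\le C$ on $\Gamma$ and the validity of expanding $f$ in Faber polynomials when $\Gamma$ is merely a set with simply connected complement (possibly with corners, as for a panel abutting its neighbors, or merely rectifiable). For the smooth-arc case relevant to this paper $\Phi$ extends analytically past $\Gamma$ and everything is clean; in full generality one leans on the Faber theory as developed by Walsh and later Markushevich/Curtiss, which is exactly why the cleanest path is to cite \cite{walsh35} for the Faber-polynomial construction and confine our own argument to assembling the coefficient decay and the geometric tail estimate above. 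I would therefore present the proof as: (i) set up $\Phi$, $g$, $C_\rho$; (ii) quote the Faber polynomial existence and growth bounds; (iii) expand $f$, bound the coefficients by $\rho$-analyticity; (iv) sum the tail on $\Gamma$ to obtain $C\rho^{-n}$.
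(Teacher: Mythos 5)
The paper does not prove this theorem: it is quoted verbatim from Walsh's book and used as a black box, so there is no in-paper argument to compare against. Your Faber-series route is a correct and standard way to establish the Bernstein--Walsh bound, but it is genuinely different from Walsh's own proof in \S4.5, which proceeds by polynomial \emph{interpolation}: one represents $f$ by a Cauchy integral over a level curve $C_{\rho'}$ with $1<\rho'<\rho$, interpolates $f$ at $n+1$ points on $\Gamma$ asymptotically distributed like the equilibrium measure (e.g.\ Fej\'er or Fekete points), and applies the Hermite remainder formula $f(z)-p_n(z)=\frac{1}{2\pi i}\int_{C_{\rho'}}\frac{\omega_n(z)}{\omega_n(t)}\frac{f(t)}{t-z}\,dt$, where the node polynomial ratio satisfies $\limsup_n\sup_{z\in\Gamma,\,t\in C_{\rho'}}|\omega_n(z)/\omega_n(t)|^{1/n}=1/\rho'$. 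The interpolation route buys constructive approximants and extends naturally to the rational/interpolatory settings Walsh treats later; your Faber route is arguably cleaner for extracting the geometric rate from coefficient decay. One detail in your sketch deserves care beyond what you wrote: the uniform bound $|\Phi_j(\tau)|\le C$ on $\Gamma$ with $C$ independent of $j$ is \emph{false} for a general compact set with simply connected complement; what holds in general is only $\limsup_j\|\Phi_j\|_\Gamma^{1/j}\le 1$, i.e.\ $\|\Phi_j\|_\Gamma\le C_\varepsilon(1+\varepsilon)^j$. Your tail estimate survives because $f$ is analytic on (hence slightly beyond) $C_\rho$, so $|a_j|\le M(\rho')^{-j}$ for some $\rho'>\rho$ and one can choose $\varepsilon$ with $(1+\varepsilon)\rho<\rho'$; you should make that absorption explicit rather than asserting the uniform Faber bound. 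You correctly flag this regularity issue as the main obstacle, and for the analytic panels of this paper $\Phi$ continues across $\Gamma$ and the issue disappears, so the proposal is sound for the intended application.
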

%
%To explain the convergence rate we combine two classical results from
%complex analysis:
%\begin{enumerate}
%\item Singularities in the analytic continuation of $f$ from $\Gamma$
%into $\mathbb{C}$ are generically produced at singularities of
%the {\em Schwarz} function for $\Gamma$.
%\item The convergence rate (in polynomial degree) of best approximation by
%a polynomial on a set $\Gamma \subset \cc$ for a function analytic on
%that set is the smallest {\em conformal distance} to any singularity
%of the analytic continuation of that function.
%This theorem is due to Walsh \cite[\S~4.5]{walsh35}.
%\end{enumerate}

To apply this to \eqref{monom}, we need to know the largest region in which
$f$ may be continued as an analytic function,
which is in general difficult.
In practical settings the panels will have been refined enough so that
on each panel preimage $\ft$ is interpolated
in the parameter $t\in[-1,1]$ to high accuracy,
thus we may assume that
any singularities of $\ft$ are distant from $[-1,1]$.
%in the complex $t$-plane.
Yet, in moving from the $t$ to $\tau$ plane,
the shape of the panel $\Gamma$ itself can introduce
new singularities in $f$ which will
turn out to explain well the observed loss of
accuracy, as follows.
\begin{pro}[geometry-induced singularity in analytic continuation of density]
Let the pullback $\ft$ be a generic function analytic at $t_\ast$.
Let $f(\tau) = \ft(t)$ where $\tau=\gamma(t)$.
Let $t_\ast\in\cc$ be such that $\gamma'(t_\ast)=0$ and $\gamma$
is analytic at $t_\ast$.
Then generically $f(\tau)$ is not analytic at $\tau_\ast = \gamma(t_\ast)$.
\end{pro}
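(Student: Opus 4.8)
The plan is to analyze the local behavior of the map $\gamma$ near a critical point $t_\ast$ and transfer it to the density. First I would set up the local normal form: since $\gamma$ is analytic at $t_\ast$ with $\gamma'(t_\ast)=0$, write its Taylor expansion $\gamma(t) = \tau_\ast + a_p(t-t_\ast)^p + O((t-t_\ast)^{p+1})$, where $p\ge 2$ is the order of the first nonvanishing derivative and $a_p\neq 0$. Thus near $t_\ast$ the map is, to leading order, a $p$-to-one branched cover: $\tau - \tau_\ast \approx a_p(t-t_\ast)^p$, so that $t - t_\ast$ behaves like $(\tau-\tau_\ast)^{1/p}$ times a function analytic and nonvanishing near $\tau_\ast$. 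The key point is that the local inverse $\gamma^{-1}$, defined on a slit neighborhood of $\tau_\ast$, has a branch-point singularity of order $1/p$ at $\tau_\ast$; this is the standard Puiseux / Weierstrass preparation statement for an analytic map with a critical point, and I would cite it rather than reprove it.

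Next I would compose: $f(\tau) = \ft(\gamma^{-1}(\tau))$. Since $\ft$ is analytic at $t_\ast$, expand $\ft(t) = \ft(t_\ast) + \ft'(t_\ast)(t-t_\ast) + \tfrac12 \ft''(t_\ast)(t-t_\ast)^2 + \cdots$. Substituting $t - t_\ast = (\tau-\tau_\ast)^{1/p}\, h(\tau)$ with $h$ analytic and $h(\tau_\ast)\neq 0$, one obtains a Puiseux series for $f$ in powers of $(\tau-\tau_\ast)^{1/p}$. The crucial observation is that the first fractional power, $(\tau-\tau_\ast)^{1/p}$, appears with coefficient proportional to $\ft'(t_\ast)$. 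Hence if $\ft'(t_\ast)\neq 0$ — which is exactly the "generic" hypothesis on $\ft$ — this term is present, and $f$ genuinely fails to be single-valued (hence not analytic) at $\tau_\ast$; its nearest singularity type is an algebraic branch point of order $1/p$. I would state explicitly that "generic" means $\ft'(t_\ast)\neq 0$, so that the non-analyticity is not an artifact of some accidental cancellation. (If all derivatives $\ft^{(j)}(t_\ast)$ for $j$ not a multiple of $p$ were to vanish, the fractional powers could cancel and $f$ would be analytic; this is the non-generic case excluded by the word "generically.")

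The main obstacle — really the only subtle point — is making precise why the leading fractional term cannot be cancelled by higher-order contributions and why single-valuedness genuinely fails. I would handle this by noting that the Puiseux expansion is unique, and that the coefficient of $(\tau-\tau_\ast)^{1/p}$ is $\ft'(t_\ast)\, a_p^{-1/p}$ (a nonzero multiple of $\ft'(t_\ast)$), independent of all higher Taylor coefficients of $\gamma$ and $\ft$; hence no cancellation is possible. A clean way to argue single-valuedness fails is monodromy: analytically continuing $f$ once around a small loop encircling $\tau_\ast$ sends $(\tau-\tau_\ast)^{1/p}$ to $e^{2\pi i/p}(\tau-\tau_\ast)^{1/p}$, so $f$ returns to a different branch whenever $\ft'(t_\ast)\neq 0$; a function analytic at $\tau_\ast$ would be invariant under this continuation. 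One remaining bookkeeping item is to confirm $\tau_\ast = \gamma(t_\ast)$ is a well-defined isolated point and that $\gamma$ being analytic and non-constant near $t_\ast$ forces $p$ finite; this is immediate from the identity theorem. Altogether the proof is short: local normal form of a branched cover, Puiseux substitution, and a monodromy argument pinned to the nonvanishing of $\ft'(t_\ast)$.
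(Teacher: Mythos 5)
Your proof is correct, but it runs in the opposite direction from the paper's. The paper argues by contradiction entirely in the forward direction: it \emph{assumes} $f$ is analytic at $\tau_\ast$, composes that putative power series $f_0+f_1(\tau-\tau_\ast)+\cdots$ with $\gamma(t)=\tau_\ast+a_2(t-t_\ast)^2+\cdots$, observes that the result has no linear term in $(t-t_\ast)$, and contradicts $\ft_1=\ft'(t_\ast)\neq 0$ by matching Taylor coefficients --- no inverse of $\gamma$, no Puiseux series, no monodromy. You instead construct the local inverse $\gamma^{-1}$ as a branched cover of degree $p$, push $\ft$ through it to get an explicit Puiseux expansion of $f$, and kill analyticity by monodromy around $\tau_\ast$. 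What your route buys: it handles a critical point of arbitrary order $p\ge 2$ (the paper implicitly takes $p=2$), it identifies the singularity type as an algebraic branch point of order $1/p$ (which is the content of the paper's surrounding remark that the Schwarz function has a square-root branch point there), and it makes explicit why no cancellation from higher-order terms can rescue analyticity (uniqueness of the Puiseux expansion). What the paper's route buys: brevity, and it sidesteps the need to justify the local normal form of the inverse --- the one place where your write-up is slightly loose, since $h$ in $t-t_\ast=(\tau-\tau_\ast)^{1/p}h(\tau)$ is analytic in the uniformizer $(\tau-\tau_\ast)^{1/p}$ rather than in $\tau$ itself (a cosmetic fix). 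Both proofs pin ``generic'' to the same condition $\ft'(t_\ast)\neq 0$, so the results agree.
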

%Singularities in the analytic continuation of $f$ from $\Gamma$
%into $\mathbb{C}$ are generically produced at singularities of
%the {\em Schwarz} function for $\Gamma$.
%
\begin{proof}
Since the derivative vanishes, the
Taylor expansion of the parameterization at $t_\ast$ takes the form
$\gamma(t) = \tau_\ast + a_2 (t-t_\ast)^2 + \dots$.
If $f$ were analytic at $\tau_\ast$, expanding gives
$f(\tau) = f_0 + f_1(\tau-\tau_\ast) + \dots$,
then inserting $\tau=\gamma(t)$ gives $f_0 + f_1a_2(t-t_\ast)^2 + \dots$
which must match, term by term, the Taylor expansion of the pullback
$\ft(t)=\ft_0 + \ft_1(t-t_\ast) + \dots$.
But generically $\ft_1 \neq 0$, in which case there is a contradiction.
\end{proof}
Similar (more elaborate) arguments are known for singularities
in the extension of Helmholtz solutions \cite{Mi86}.
Here $\tau_\ast$ is an example of a singularity of the
{\em Schwarz function} \cite{Da74,Shapiro92} for
the curve $\Gamma$.
Recall that the Schwarz function $G$
is defined by $\overline{G(\tau)} = \gamma(\overline{\gamma^{-1}(\tau)})$,
which is the analytic reflection of $\tau$ through the arc $\Gamma$.
At points where $\gamma'=0$, the inverse map $\gamma^{-1}$,
hence the Schwarz function, has a square-root type branch singularity.
\Cref{f:complex}(a) shows that for a moderately bent parabolic panel,
the Schwarz singularity is quite close to the concave side.
Note that the above argument relies on $\ft$ having a generic expansion,
which we believe is typical; we show below that its convergence rate
prediction holds well in practice.

In summary, smooth but bent panels often have a nearby Schwarz singularity,
which generically induces a singularity at this same location
in the analytic continuation of the density $f$; then
the equipotential for $\Gamma$ at this location controls the
best rate of polynomial approximation, placing a fundamental limit on the
Helsing--Ojala convergence rate in $n$.
\Cref{f:complex}(b)
shows that, for a moderately bent panel,
the close singularity is effectively
{\em shielded electrostatically} by the concave panel,
thus the convergence rate $\rho = e^{g(\tau)}$
is very small (close to 1).
In \cref{f:complex}(c) we test this quantitatively:
for a parabolic panel and simple analytic pullback density $\ft$ we
compare the maximum error in $n$-term polynomial approximation on $\Gamma$
with the prediction from the above Schwarz--Walsh analysis.
The agreement in rate is excellent over a range of curvatures.
This also matches the loss of digits
seen in the central column of figures~\ref{fig:compare_2d_16} and \ref{fig:compare_2d_32}:
e.g.\ for even mild curvature $k=0.25$,
$n=16$ achieves only 5 digits, and $n=32$ only 10 digits.

%contrast equipotentials to images of Bernstein ellipses.

\begin{cnj}[Schwarz singularities at focal points]
Note that in the parabola example of \cref{f:complex},
a square-root type Schwarz singularity appears near
$\Gamma$'s highest curvature point,
at distance $R/2$ on the concave side, where $R$ is the radius of curvature.
This is also approximately true for ellipses, whose foci induce singularities
\cite[\S 9.4]{Shapiro92}.
We also always observe this numerically for other analytic curves.
Hence we conjecture that it is a general result for analytic curves,
in an approximate sense that becomes exact as $R\to 0$.
\end{cnj}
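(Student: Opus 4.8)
The plan is to reduce the conjecture to a local question about the nearest complex zero of the parametrization derivative, and then to expand around the osculating parabola. Recall from the discussion following the Proposition that the Schwarz function $G$ of $\Gamma$ is singular exactly at the images $\tau_\ast=\gamma(t_\ast)$ of points where $\gamma'(t_\ast)=0$, and that at a \emph{simple} such zero both $\gamma^{-1}$ and hence $G$ (via $\overline{G(\tau)}=\gamma(\overline{\gamma^{-1}(\tau)})$ with $\gamma$ analytic) acquire a square-root branch point. So it suffices to find the simple zero of $\gamma'$ nearest the curvature maximum and to show its image sits at distance $\approx R/2$ along the concave normal.

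First I would fix local coordinates: translate the highest-curvature point $p$ to the origin, rotate so the tangent there is the real axis, and orient so the concave side is $\{\Im\tau>0\}$. Near $p$ write $\Gamma$ as a graph $\gamma(x)=x+if(x)$ with $f$ analytic in a fixed disc $|x|<\delta_0$ and $f(0)=f'(0)=0$, $f''(0)=\kappa_0:=1/R>0$. Since $f'(0)=0$, the arclength derivative of curvature at $p$ equals $f'''(0)$, which vanishes because $p$ is a curvature maximum; hence $f(x)=\tfrac{\kappa_0}{2}x^2+\bigO(x^4)$ and $\gamma'(x)=1+i\kappa_0 x+\bigO(x^3)$.

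Next I would locate the zero for $R$ small relative to $\delta_0$ — the regime in which the claim becomes exact: one fixes the ``outer'' profile and lets $\kappa_0\to\infty$, equivalently rescales so that $\Gamma$ tends to the osculating parabola $\eta=\xi^2/2$, whose unique Schwarz singularity is exactly $i/2$ (the parabola computation of \cref{f:complex}). A Rouch\'e argument on the circle $|x|=2R$, where $i\kappa_0 x$ has modulus $2$ and winds once while the $1$ and the $\bigO(x^3)$ terms are uniformly smaller as $R\to 0$, gives exactly one zero $x_\ast$ of $\gamma'$ in that disc, with $x_\ast=i/\kappa_0+\bigO(R^4)=iR+\bigO(R^4)$, and it is simple since $\gamma''(x_\ast)=i(\kappa_0+\bigO(R^2))\neq 0$. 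Then
\[
\tau_\ast=\gamma(x_\ast)=iR+i\cdot\tfrac{\kappa_0}{2}(iR)^2+\bigO(R^4)=\tfrac{iR}{2}+\bigO(R^4),
\]
so $\tau_\ast$ is a square-root Schwarz singularity at distance $R/2+\bigO(R^4)$ from $p$ along the inward normal, with vanishing relative error as $R\to 0$. For an ellipse with semi-axes $a>b$ one checks that the major-axis vertex has $R=b^2/a$ and focal distance $a-\sqrt{a^2-b^2}=\tfrac{b^2}{2a}+\bigO(b^4/a^3)=R/2+\cdots$, recovering the classical fact \cite[\S 9.4]{Shapiro92} and confirming consistency.

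The hard part is making ``exact as $R\to 0$'' a genuine theorem rather than the agreement of a couple of examples: one must pin down the family of curves (fix an analytic profile and vary only the quadratic coefficient, equivalently rescale so the curves converge to $\eta=\xi^2/2$), prove uniform analyticity of $f$ on a fixed disc so the $\bigO(x^3)$ tail and the Rouch\'e count are controlled independently of $R$, and — for the ``closest to $\Gamma$'' aspect of the statement — rule out that some other zero of $\gamma'$ farther from $p$ produces a Schwarz singularity nearer $\Gamma$, using that the local radius of curvature (hence $\approx R(s)/2$ for the associated singularity) only grows as one moves away from a curvature maximum. An unconditional general result is presumably false without a nondegenerate curvature maximum and analyticity on a neighbourhood large compared with $R$, which is why the statement is offered only as a conjecture in this approximate sense.
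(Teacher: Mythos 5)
The statement you are addressing is a \emph{conjecture}: the paper offers no proof, only three pieces of evidence (the explicit parabola computation $\tau_\ast=i/4k=iR/2$, the classical fact that an ellipse's Schwarz function is singular at its foci, and numerical observation). Your proposal therefore goes beyond the paper. As a leading-order derivation it is sound and consistent with both of the paper's examples: the reduction to the nearest zero of $\gamma'$ matches the paper's own identification of Schwarz singularities with critical values of $\gamma$; the normalization $f(0)=f'(0)=f'''(0)=0$ at a curvature extremum is correct; the Rouch\'e localization of a simple zero at $x_\ast=i/\kappa_0+\bigO(R^4)$ and the evaluation $\gamma(x_\ast)=iR-\tfrac{i}{2}\kappa_0R^2+\bigO(R^4)=iR/2+\bigO(R^4)$ are correct; and the ellipse check $a-\sqrt{a^2-b^2}=b^2/(a+c)\to R/2$ is right. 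This is a genuinely useful substantiation of the conjecture that the authors do not supply.

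That said, it is not yet a proof of the conjecture, for reasons partly beyond those you list. First, your opening claim that $G$ is singular \emph{exactly} at images of zeros of $\gamma'$ is too strong: writing $G=\tilde\gamma\circ\gamma^{-1}$ with $\tilde\gamma(t)=\overline{\gamma(\bar t)}$, singularities also arise where the analytic continuation of $\gamma$ (hence $\tilde\gamma$) breaks down, and the square-root character at $\tau_\ast$ additionally requires $\tilde\gamma'(t_\ast)=\overline{\gamma'(\overline{t_\ast})}\neq 0$; both points are harmless for entire parametrizations like the parabola but must be hypotheses in general. Second, the meaning of ``exact as $R\to 0$'' genuinely depends on the family of curves: for a circle the Schwarz singularity sits at distance $R$, not $R/2$, so the limit must be taken with the non-quadratic part of the profile held fixed on a disc of radius independent of $R$ (your rescaling to the osculating parabola), and this choice should be stated as part of the theorem rather than discovered afterwards. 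Third, the ``nearest singularity'' aspect --- which is what actually controls the Walsh convergence rate in \cref{s:walsh} --- is not established; your monotone-curvature heuristic does not rule out closer singularities coming from zeros of $\gamma'$ far from the curvature maximum, or from singularities of $\gamma$ itself. None of these is a fatal error --- you flag most of them yourself --- but together they are why the result remains a conjecture rather than a theorem.
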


% fffffffffffffffffffffffffffffffffffffffffffffffffffffffffffffffffffffffffff
\newlength{\cmpw}
\setlength{\cmpw}{0.23\textwidth}
\newlength{\cmph}
\setlength{\cmph}{0.16\textwidth}
\begin{figure}[!htp]
  \centering
  % Generated by: matlab/compare_2d_complex_real.m
  \includegraphics[width=\cmpw]{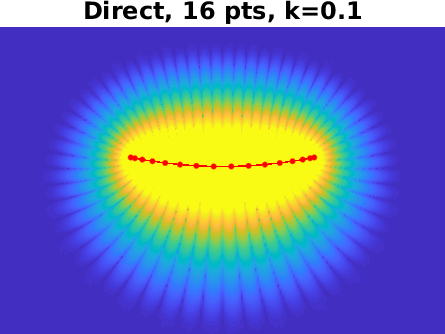} 
  \includegraphics[width=\cmpw]{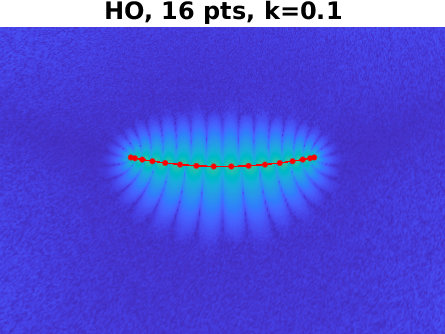}
  \includegraphics[width=\cmpw]{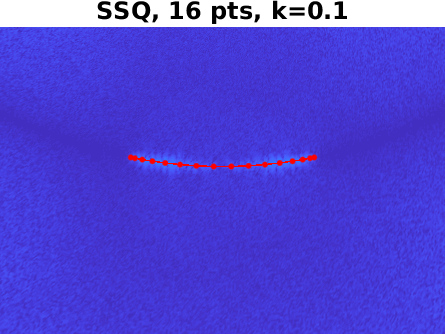}   
  \includegraphics[height=\cmph]{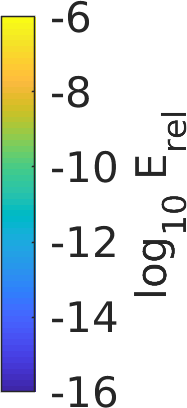}
  \\
  \includegraphics[width=\cmpw]{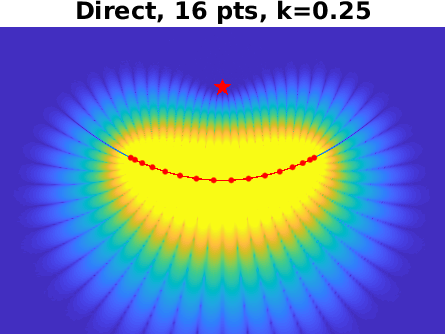}
  \includegraphics[width=\cmpw]{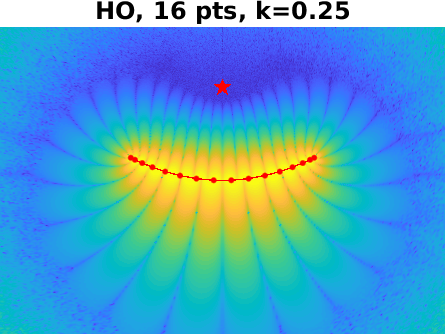}
  \includegraphics[width=\cmpw]{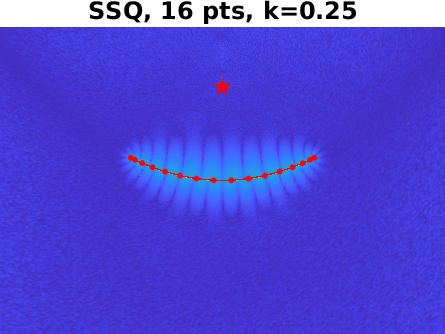}
  \includegraphics[height=\cmph]{colorbar_16_6}
  \\
  \includegraphics[width=\cmpw]{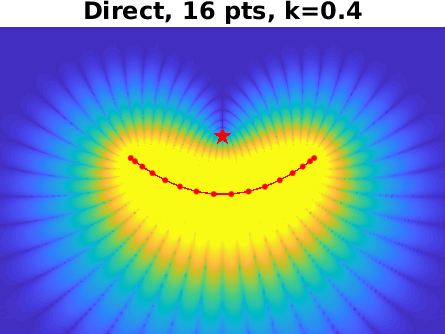}
  \includegraphics[width=\cmpw]{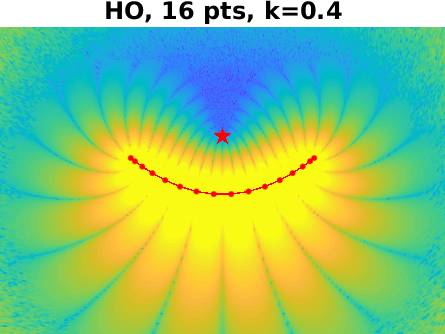}
  \includegraphics[width=\cmpw]{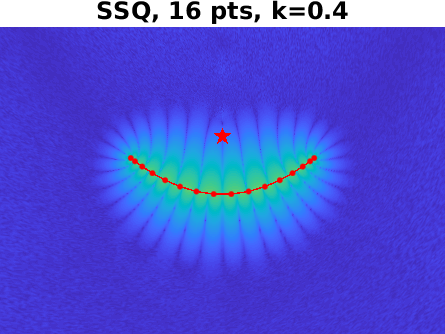}
  \includegraphics[height=\cmph]{colorbar_16_6}
  \caption{Spatial plot of relative errors in 2D quadratures for the
    Laplace double-layer \eqref{eq:lap_dbl_real} on $\Gamma$ a
    parabolic panel $\v g(t) = (t,kt^2)$, $-1\le t \le 1$, with smooth
    density $\rho(\v y) = y_1y_2$, using $n=16$ nodes.  Each row shows
    a different curvature $k$.  Left column: direct quadrature of
    \cref{s:direct}.  Middle column: Helsing--Ojala (HO) quadrature of
    \cref{s:helsing}.  Right column: proposed singularity swap
    quadrature (SSQ) of \cref{s:real}.  The red $\ast$ is the Schwarz
    singularity of $\Gamma$.}
  \label{fig:compare_2d_16}
\end{figure}

\begin{figure}[!hbp]   % ffffffffffffffffffffffffffffffffffffffffffffffffffffff
  \centering
  % Generated by: matlab/compare_2d_complex_real.m
  \includegraphics[width=\cmpw]{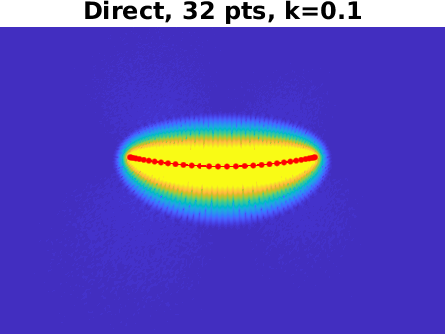}
  \includegraphics[width=\cmpw]{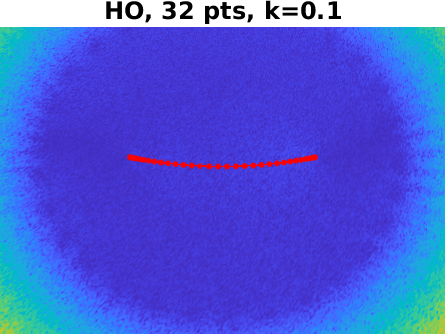}
  \includegraphics[width=\cmpw]{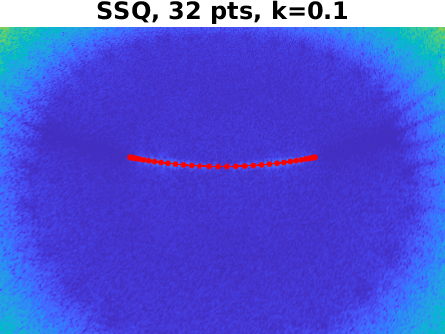}
  \includegraphics[height=\cmph]{colorbar_16_6}
  \\
  \includegraphics[width=\cmpw]{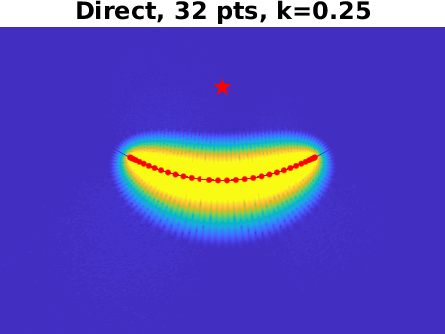}
  \includegraphics[width=\cmpw]{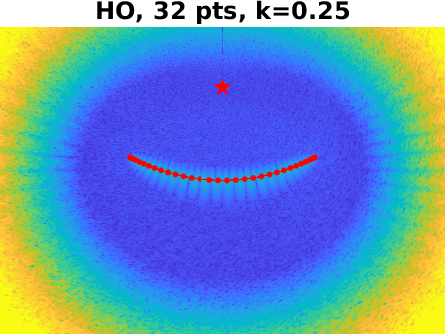}
  \includegraphics[width=\cmpw]{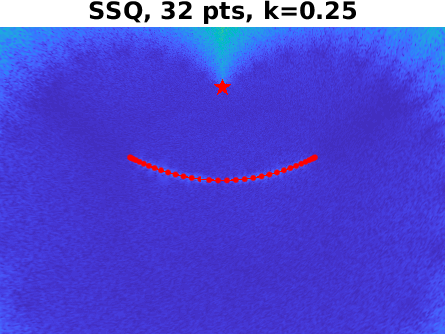}
  \includegraphics[height=\cmph]{colorbar_16_6}
  \\
  \includegraphics[width=\cmpw]{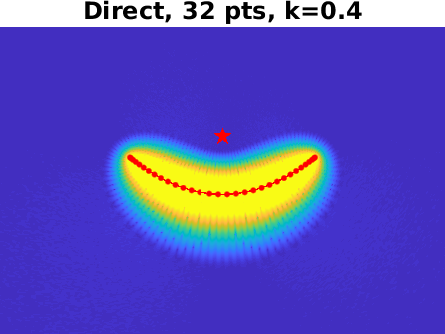}
  \includegraphics[width=\cmpw]{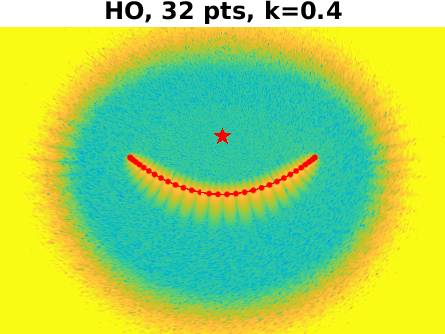}
  \includegraphics[width=\cmpw]{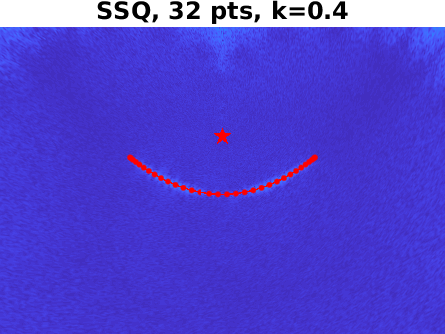}
  \includegraphics[height=\cmph]{colorbar_16_6}
  \caption{Same as \cref{fig:compare_2d_16}, after interpolating panel
    data to $n=32$ nodes.}
  \label{fig:compare_2d_32}
\end{figure}

\subsection{Interpolatory quadrature using a real monomial basis}
\label{s:real}

We now propose a ``singularity swap'' method which pushes the
interpolation problem to the panel preimage $t\in[-1,1]$, thus bypassing the
rather severe effects of curvature just explained.
Recalling that the target point is $\zeta$, we first write the integrals \eqref{eq:complex_L} and
\eqref{eq:complex_m} in parametric form
\begin{align}
  I_L &= \int_{-1}^1 h(t)\log Q(t) \, \dif t~,
        \label{eq:complex_L_param}\\
  I_m &= \int_{-1}^1 \frac{h(t)}{Q(t)^m} \dif t~, \quad m=1,2,\dots,
        \label{eq:complex_m_param}
\end{align}
where we have introduced the displacement function $Q$ and the smooth function $h$,
\begin{align}
  Q(t) &:= \gamma(t)-\zeta,
  \label{eq:Q_def} \\
  h(t) &:= \ft(t) \abs{\gamma'(t)}.
  \label{h_def}
\end{align}
Now let $t_0$ be the root of $Q$ nearest to $[-1,1]$. (Unless the panel
is very curved, there is only one such nearby root and it is simple.)
Then $Q(t)/(t-t_0)$ has a removable singularity at $t_0$, and hence
it and its reciprocal are
analytic in a larger neighborhood of $[-1,1]$ than the above integrands.
By multiplying and dividing by $(t-t_0)$,
\begin{align}
  I_L &= \int_{-1}^1 h(t) \log\frac{Q(t)}{t-t_0} \dif t + 
         \int_{-1}^1 h(t) \log(t-t_0) \dif t~, \label{eq:logint_omega} \\
  I_m &= \int_{-1}^1 \frac{h(t)(t-t_0)^m }{Q(t)^m} \frac{1}{(t-t_0)^m}\dif t~,
  \quad m=1,2,\dots\label{eq:mint_omega}
\end{align}
which as we detail below can be handled as special cases of
\eqref{eq:complex_L} and
\eqref{eq:complex_m} for a new $\Gamma=[-1,1]$ written in the $t$ plane.
We have ``swapped'' the singularity from the $\tau$
to the $t$ plane.
As \cref{f:complex} illustrates, the preimage of the
Schwarz singularity has a much higher conformal distance from $[-1,1]$
than the actual singularity in the $\tau$-plane has from $\Gamma$,
indicating much more rapid convergence.

Thus we now apply the Helsing--Ojala methods of \cref{s:helsing}, but to $[-1,1]$ in the $t$ plane.
Namely, as before, let $t_j$ and $w_j$, $j=1,\dots,n$ be the Gauss--Legendre
quadrature for $[-1,1]$.
The first integral in \eqref{eq:logint_omega} is smooth, so direct quadrature
is used. The second integral in \eqref{eq:logint_omega} is evaluated
via \eqref{ILa} and \eqref{qkz}, setting $\Gamma=[-1,1]$,
with the coefficients $c_k$ for the function $h(t)$ found by
solving the Vandermonde system as in \cref{s:helsing}.
The first term $h(t)(t-t_0)^m/Q(t)^m$ in \eqref{eq:mint_omega} is smooth
and plays the role of $f(\tau)$ in \eqref{eq:complex_m}, so
its coefficients are found similarly. 
\eqref{eq:mint_omega} is then approximated
by \eqref{Ima} with \eqref{p11}--\eqref{precurm}.
\begin{rmk}
Since in this scheme the new flat ``panel'' $[-1,1]$ is fixed,
one may LU decompose the Vandermonde matrix once and for all, then
use forward and back-substitution to solve for the coefficients
$\{c_k\}$ given the $n$ samples of each of the two smooth functions,
namely $\{h(t_j)\}$ and $\{h(t_j)(t_j-t_0)^m/Q(t_j)^m\}$,
with only $\bigO(n^2)$ effort.
\end{rmk}

\begin{rmk}
With a very curved panel it is possible that more than one
root of $Q(t)$ is relevant; see \cref{fig:all_poles}.
In the case where poles $z_1$ and $z_2$ need to be cancelled,
we have derived recursion formulae for $p_k^m(z_1,z_2)$ and $q_k(z_1,z_2)$
which generalize those in \cref{s:recur}.
However, we have not found these to be needed in practice, so omit them.
\end{rmk}

\subsubsection{Finding the roots of $Q(t)$}
\label{sec:finding-poles}

The only missing ingredient in the scheme just described
is to find the nearest complex root $t_0$ satisfying
\begin{align}
  Q(t_0) := \gamma(t_0) - \zeta  = 0~,
\end{align}
i.e.\ the preimage of the (complex) target point $\zeta$ under the
complexification of the panel map $\gamma$; see \cref{fig:all_poles}.
We base our method on that of \cite{AfKlinteberg2018} (where
roots were needed only for the purposes of error
estimation).
Let $\poly_n[\gamma](t)$ denote a degree $n-1$ polynomial approximant
to $\gamma(t)$ on $[-1,1]$.
Using the data at the Legendre nodes $\{t_j\}$, forming
$\poly_n[\gamma](t)$ is both well-conditioned and stable if we use a
Legendre (or Chebyshev) expansion, and has an $\bigO(n^2)$ cost. See \cite{AfKlinteberg2018} for details on how to use a Legendre expansion.
Continuing each basis function to $\mathbb C$
gives a complex approximation
\begin{align}
  \tilde Q(t) := \poly_n[\gamma](t) - \zeta~,
  \label{eq:Qtilde}
\end{align}
for which we seek roots nearest $[-1,1]$.
Given a nearby starting guess, a single root of $\tilde Q$ can be
found using Newton's method, which converges rapidly and costs
$\bigO(n)$ per iteration. A good starting guess is
$t_{\mbox{\tiny init}} \approx s(\zeta)$, recalling that $s$ defined by \eqref{endpointmap}
maps the panel endpoints to $\pm 1$.
With this initialization, Newton iterations converge to the nearest
root in most practical cases. The iterations can, however, be
sensitive to the initial guess when two roots are both relatively
close to $[-1,1]$, and sometimes converge to the second-nearest
root. As illustrated in \cref{fig:all_poles}, this typically happens
only when the target point is on the concave side of a very curved panel.

\begin{figure}  % ffffffffffffffffffffffffffffffffffffffffffffffffffffffff
  \centering
  % Generated by matlab/show_all_poles.m
  \hspace{0.01\textwidth}
  \includegraphics[width=0.45\textwidth]{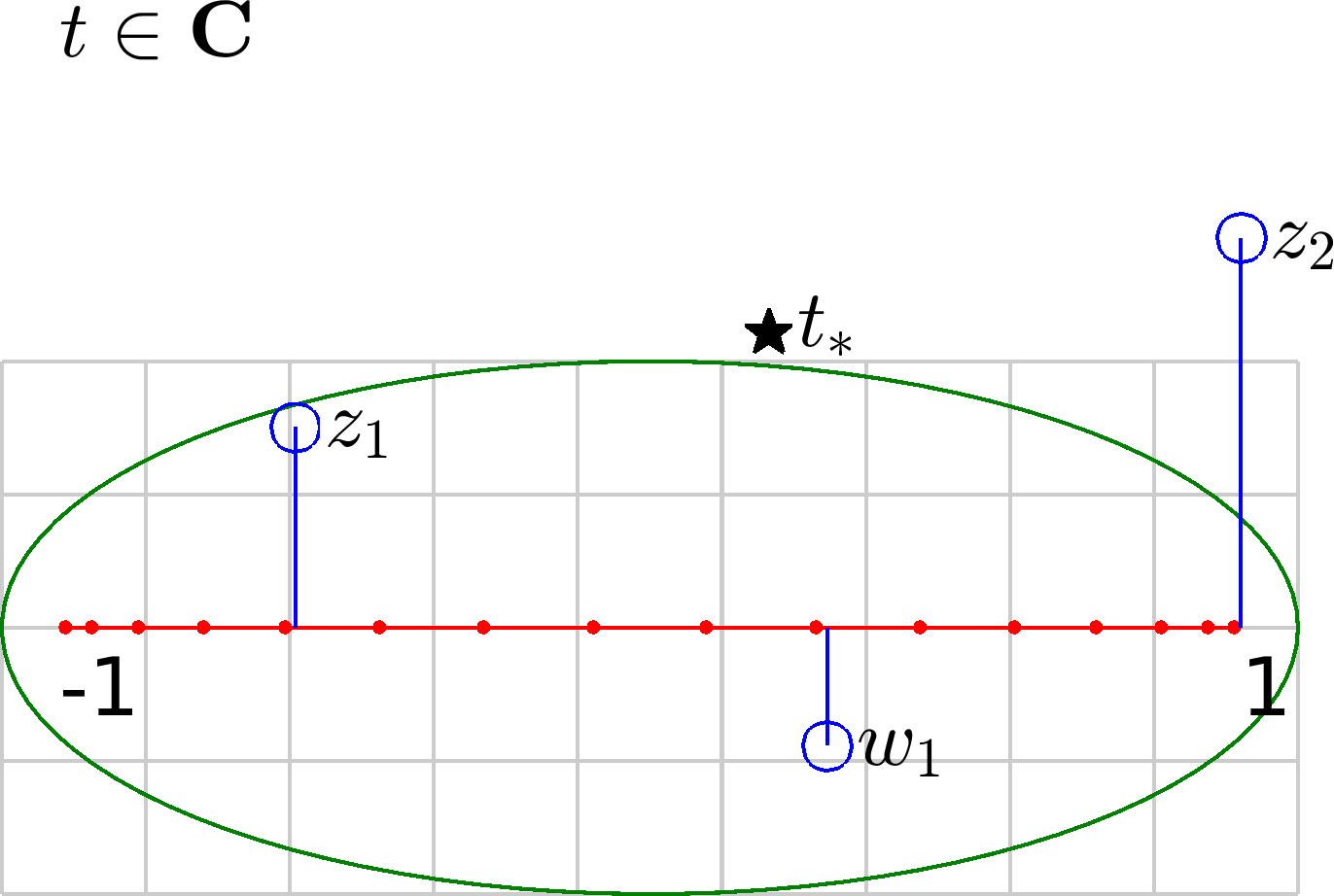}
  \hfill
  \includegraphics[width=0.45\textwidth]{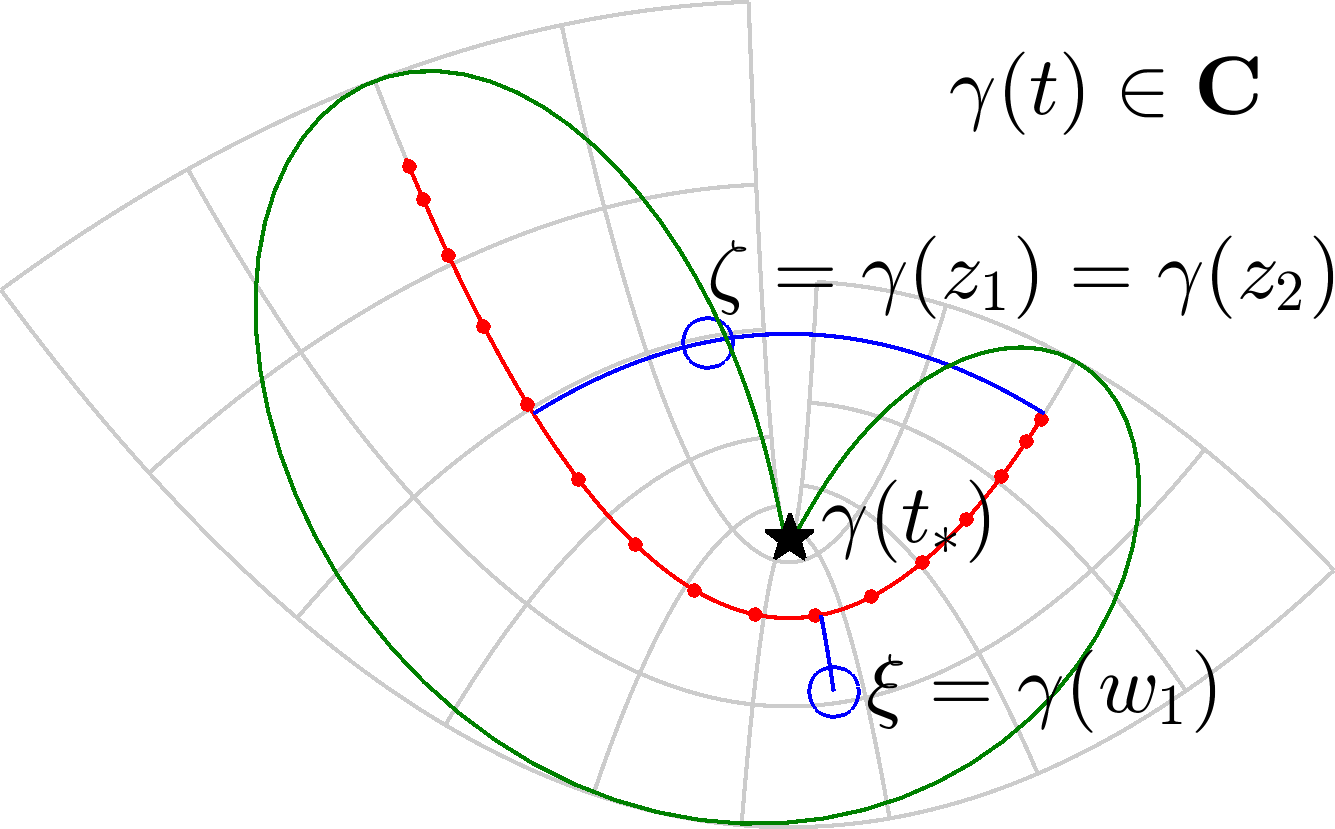}
  \hspace{0.01\textwidth}
  \caption{Illustration of the relationship between the roots of the
    displacement $Q(t)$, to the left, and target points in 2D physical
    space, to the right. The point $\xi$ corresponds to only one
    nearby root, $w_1$, while the point $\zeta$ corresponds to two
    nearby roots, $z_1$ and $z_2$. Note that only $z_1$ lies inside
    the Bernstein ellipse of radius $\Reps$, marked green, such that it requires
    singularity swapping. The point $\gamma(t_{\ast})$ is the Schwarz
    singularity of the curve, marking the point where $\gamma^{-1}$
    stops being single-valued.}
  \label{fig:all_poles}
\end{figure}  % fffffffffffffffffffffffffffffffffffffffffffffffffffffffffffff

A more robust, but also more expensive, way of finding the nearest
root is to first find all $n-1$ roots of $\tilde Q$ at the same time,
and then pick the nearest one.  This can be done using a matrix-based
method, which finds the roots as the eigenvalues to a generalized
companion (or ``comrade'') matrix \cite{Barnett1975}. This has a cost
that is $\bigO(n^3)$ if done naively, and $\bigO(n^2)$ if done using
methods that exploit the matrix structure \cite{Aurentz2014}.
% Nearly all of these roots
% will be spurious, but those can be filtered out since they lie roughly
% on a Bernstein ellipse with radius corresponding to the accuracy of the
% interpolant \cite[Ch.~18]{Trefethen2013}.

In order to determine whether Newton iterations or a matrix-based
method should be used for finding the root $t_0$, we suggest a
criterion based on the distance to the nearest Schwarz singularity preimage of
the panel, $t_{\ast}$, as this is a measure of the size of the region
where $\gamma^{-1}$ is single-valued.
Let $\Reps$ be a Bernstein radius beyond which direct Gauss--Legendre
quadrature is expected to have relative error $\epsilon$.
E.g., ignoring the prefactor in \eqref{GLexp} and solving for $\rho$,
\be
\Reps := \epsilon^{-1/2n}
\label{Reps}
\ee
is a useful estimate.
Recalling \eqref{eq:bernstein_radius},
then $\rho(t_{\ast}) \le C \Reps$, where we choose the constant $C=1.1$,
indicates that there may be more
than one nearby root, and that it is safer to use a matrix-based root
finding method. For each panel, $t_{\ast}$ can be found at the time of
discretization, by applying Newton's method to the polynomial
$\poly_n[\gamma'](t)=0$, with the initial guess $t_{\ast} \approx 0$.

\begin{rmk}
\label{r:comrade}
  By switching to matrix-based root finding when there is a nearby
  Schwarz singularity, we get a root finding process that is robust
  also for very curved panels. However, in practical applications we
  have not observed it to be necessary, e.g.\ for the results of
  \cref{sec:numerical-results}. It is on the other hand necessary in
  the examples in \cref{fig:compare_2d_16,fig:compare_2d_32}, because
  we there test the specialized quadrature at distances well {\em beyond}
  where it is required.
\end{rmk}

\begin{rmk}
The root finding process can be made efficient by
computing the interpolants $\poly_n[\gamma](t)$
and $\poly_n[\gamma'](t)$ for each segment at the
time of discretization, such that they can be reused for every target
point $\zeta$.
\end{rmk}

\subsubsection{Tests of improved convergence rate for curved panels}
\label{s:parabola}

We now compare the three methods: i) direct Gauss--Legendre quadrature,
ii) Helsing--Ojala complex interpolatory quadrature,
and iii) the proposed real singularity swap quadrature.
We evaluate the Laplace double layer potential from a single panel,
\begin{align}
  u(\v x) = \int_{\Gamma}
  \rho(\v y)  \frac{(\v y-\v x)\cdot\v n}{\abs{\v y - \v x}^2} \dif s(\v y)~.
  \label{eq:lap_dbl_real}
\end{align}
The panel is the same parabola as in \cref{f:complex},
i.e.\ $\v g(t) = (t, kt^2)$, or in complex form, 
$\gamma(t) = t + ikt^2$,
and we explore the dependence on curvature $k$.
In complex form, which is necessary to apply the
quadratures, the layer potential is (see
\cref{sec:compl-vari-kern})
\begin{align}
 u(\zeta) = 
  - \Im \int_{-1}^1 \frac{\rho(\gamma(t)) \gamma'(t) \dif t }{\gamma(t) - \zeta}~.
  \label{eq:lap_dbl_cpx}
\end{align}

Figs.~\ref{fig:compare_2d_16} and \ref{fig:compare_2d_32} compare
the three schemes for various curvatures,
for $n=16$ and $n=32$ respectively. For $n=32$ all data is upsampled
by Lagrange interpolation from $n=16$ (which is already
adequate to represent it to machine precision).
%For the real basis, we find the roots of the
%distance function $Q$ \eqref{eq:Q_def} using the comrade matrix, and
%compute the quadrature rule using the root that lies on the smallest
%Bernstein ellipse.
The error is in all cases measured against a
reference solution computed using adaptive quadrature
(\texttt{integral} in MATLAB with \texttt{abstol} and \texttt{reltol}
set to \texttt{eps}), and the relative error $E_{\text{rel}}$ computed against the maximum value of $u$ on the grid.
To see how they behave when pushed to their
limits, we deliberately apply the quadratures much further away than
necessary, i.e.\ also in the region where direct quadrature achieves
full numerical accuracy. 

The direct errors (left column of each figure) have magnitudes
controlled by the images of Bernstein ellipses under the parameterization,
as explained in \cref{s:direct}. The Helsing--Ojala scheme
(middle column) is much improved over direct quadrature,
but, because of its convergence rates dependence (\cref{s:walsh})
the reduction in accuracy is severe as curvature increases,
although it can to some can extent be ameliorated by using $n=32$.
In addition, there is an error in
the quadrature that grows with the distance from the panel,
particularly for $n=32$, due to amplification of roundoff
error in the upward recurrence for the integrals $\{p_1, \dots, p_n\}$. This
was also noted in \cite{Helsing2008}, where the suggested fix was to
instead run the recurrence backwards for distant points, starting from
a value of $p_n$ computed using the direct quadrature. This avoids the
problem of catastrophic accuracy loss, but only because specialized
quadrature is used where direct quadrature would be sufficient.
The bottom middle plot of \cref{fig:compare_2d_32}
shows at best 10 accurate digits, and that only over quite a narrow range
of distances, making the design of algorithms to give a uniform accuracy
for all target locations rather brittle.

For the proposed singularity swap quadrature (right columns),
the there is still a loss of accuracy with
increased curvature, but the improvement over Helsing--Ojala is striking.
The instability of upwards recurrence is also much more mild.

\begin{rmk} \label{r:upsampling}
  The loss of accuracy with increased curvature can for the
  singularity swap quadrature be explained by considering the roots of
  the displacement function $Q(t)$. We have ``swapped'' out the
  nearest root $t_0$, so the region of analyticity of the regularized
  integrand $h(t)(t-t_0)/Q(t)$ is bounded by the second nearest root
  of $Q(t)$, illustrated by $z_2$ in \cref{fig:all_poles}. This
  limits the convergence rate of the polynomial coefficients
  $\braces{c_k}$, such that we may need to upsample the panel in order
  for the coefficients to fully decay. In the case of our parabolic
  panel, the second nearest root gets closer with increased curvature,
  which explains why upsampling to 32 points is necessary to achieve
  full accuracy in some locations. Note that for the upsampling to be
  beneficial, the coefficients for $k>n$ must be nonzero. This is only
  the case if the components of the integrand ($\rho$, $\gamma$,
  $\gamma'$) are upsampled separately, before the integrand is
  evaluated at the new nodes.
\end{rmk}

% do we need this: ?
%
%For this test case, it is clear that the interpolatory quadrature has
%much better accuracy when using the real monomial basis, compared to
%the complex one. Considering how simple the test is, it is tempting to
%conclude that the real basis is clearly superior. This does, however,
%seem to be a pathological example for the complex basis. As is shown
%in the next example, the difference is usually less dramatic when
%evaluating an actual solution to a boundary value problem.

% 333333333333333333333333333333333333333333333333333333333333333333333333333
\section{Line integrals on curved panels in three dimensions}
\label{sec:three-dimensions}

Since the logarithmic kernel is irrelevant in 3D,
we care about the parametric form of \eqref{eq:model_laypot},
\begin{align}
  I_m = I_m(\v x)  = \int_{-1}^1 \frac{\ft(t)}{\abs{\v g(t) - \v x}^m} \abs{\v g'(t)} \dif t ,
  \qquad m=1,3,5,\dots,
\label{eq:3dint_param}
\end{align}
where $\Gamma = \v g([-1,1])$ is an open curve in $\mathbb R^3$,
and $\tilde f$ incorporates the density and possibly other smooth denominator
factors in the kernel $K$.
Fixing the target $\v x$, then
introducing the 3D squared distance function $R^2$ (analogous to \eqref{babyR2})
and the smooth function $h$,
\begin{align}
  R(t)^2 &:= \abs{\v g(t) - \v x}^2 =
           \pars{g_1(t)-x_1}^2 + \pars{g_2(t)-x_2}^2 + \pars{g_3(t)-x_3}^2,
  \label{eq:R2} \\
  h(t) &:= \ft(t) \abs{\v g'(t)}~,
  \label{h_def3d}
\end{align}
we can write \eqref{eq:3dint_param} as
\begin{align}
  I_m = \int_{-1}^1 \frac{h(t)}{ \pars{R(t)^2}^{m/2}} \dif t .
  \label{eq:3dint_R2}  
\end{align}
This integrand has singularities at the roots of $R(t)^2$,
which, since the function is real for real $t$,
come in complex conjugate pairs $\{t_0, \overline{t_0}\}$. How to find
these roots is discussed shortly in \cref{sec:finding-roots-3d}; for now
we assume that they are known.
As in 2D, if the line is not very curved, and the target is nearby,
there is only one nearby pair of roots.
%\todo[inline]{kill $\omega(t)$ and replace with single-root case, right? LUDVIG SAYS YES.   Also: replace $z$ by $t$, since it's in the $t$ plane.}

We construct a quadrature for \eqref{eq:3dint_R2}
as in the 2D case \eqref{eq:mint_omega}, except that now there is
a conjugate pair of near singularities to cancel.
Thus we write \eqref{eq:3dint_R2} as
\be
I_m \;=\;
\int_{-1}^1 H(t)
\cdot \frac{1}{\left((t-t_0)(\overline{t-t_0})\right)^{m/2}} \dif t ~,
\qquad \mbox{where }
H(t):=\frac{h(t)\left( (t-t_0)(\overline{t-t_0})\right)^{m/2}}{\pars{R(t)^2}^{m/2}}~.
\label{H}
\ee
$H(t)$ can be expected to be analytic in a much
larger neighborhood of $[-1,1]$ than the integrand.
As in \cref{s:real}, we now represent $H(t)$ in a real monomial basis,
\be
H(t) = \sum_{k=1}^n c_k t^{k-1} ~,
\label{Hmonom}
\ee
getting the coefficients $c_k$ by solving the Vandermonde
system
$$
A \v c = \v h
$$
with the matrix entries $A_{ij} = t_i^{j-1}$, where, as before, $\{t_i\}_{i=1}^n$
are the panel's parameter Legendre nodes in $[-1,1]$. We fill $\v h$
by evaluating its entries $\{H(t_i)\}_{i=1}^n$.
By analogy with \eqref{pkm}, we set
\begin{align}
  P_k^m(t_0) = \int_{-1}^1 \frac{t^{k-1}}{\left((t-t_0)(\overline{t-t_0})\right)^{m/2}} \dif t
  \; = \; \int_{-1}^1 \frac{t^{k-1}}{|t-t_0|^m} \dif t~,
\qquad k=1,\dots, n ~,
  \label{Pkm}
\end{align}
which one can evaluate to machine accuracy
using recurrence relations as described in the next section.
Combining \eqref{Pkm} and \eqref{Hmonom} gives the final
quadrature approximation to \eqref{eq:3dint_param},
\be
I_m \approx \sum_{k=1}^n c_k P_k^m(t_0)~.
\label{Imapprox}
\ee
The adjoint method of \cref{s:adjoint} can again be used
to solve for a weight vector $\v \lambda^m$ whose inner product with
$\v h$ approximates $I_m$.
As in \cref{s:real}, the error in \eqref{Imapprox} is essentially due to
the convergence rate of the best polynomial representation \eqref{Hmonom}
on $[-1,1]$, which is likely to be rapid
because of the absence of curvature effects (\cref{s:walsh}).

% PPPPPPPPPPPPPPPPPPPPPPPPPPPPPPPPPPPPPPPPPPPPPPPPPPPPPPPPPPPPPPPPPPPPPPPPPP
\subsection{Recurrence relations for 3D kernels on a straight line}
\label{sec:recurs-form-three}

The above singularity swap has transformed the integral on a
curved line $\Gamma$ in 3D to \eqref{H}, whose denominator
corresponds to that of a {\em straight line} in 3D.
Such upward recursions are available in
\cite[App.~B]{Tornberg2006}, where they were used for Stokes quadratures near
straight segments. We will here present them with some improvements
which increase their stability for $t_0$ in certain regions of $\mathbb C$.
We present the case of a single root pair
$\{t_0,\overline{t_0}\}=t_r \pm it_i$, for orders $m=1,3,5$,
noting that, while we have derived formulae for double root pairs,
we have found that they are never needed in practice.
To simplify notation (matching notation for $I_n^\beta$ in \cite[App.~B]{Tornberg2006}), we write
\begin{align}
  b &:= -2 t_r, & c &:= t_r^2+t_i^2, & d &:= t_i^2 ,
\end{align}
and for the distances to the parameter endpoints we write
\begin{align}
  u_1 &= \sqrt{(1+t_r)^2 + t_i^2} = |1+t_0|, &
  u_2 &= \sqrt{(1-t_r)^2 + t_i^2} = |1-t_0|.  
\end{align}

Beginning with $m=1$ and $k=1$, the integral \eqref{Pkm} is
\begin{align}
  P_1^1(t_0) = \int_{-1}^1 \frac{\dif t}{\sqrt{(t-t_r)^2 + t_i^2}}
  = \log\left( 1-t_r + u_2 \right)
  - \log\left( -1-t_r + u_1 \right) .
  \label{eq:p11_vanilla}
\end{align}
% ahb checked numerically
This expression suffers from cancellation for $t_0$ close to $[-1,1]$,
and must be carefully evaluated. To begin with, it is more accurate in
the left half plane, even though the integral is symmetric in $t_r$,
so the accuracy can be increased by evaluating it after the
substitution $t_r \to -|t_r|$. In addition, the argument of the second
logarithm of \eqref{eq:p11_vanilla} suffers from cancellation when
$|t_r| < 1$ and $t_i^2 \ll (1-|t_r|)^2$, even after the
substitution. In this case we evaluate it using a Taylor series in
$t_i$,
\begin{align}
  -(1-|t_r|) + \sqrt{(1-|t_r|)^2 + t_i^2}
  &= (1-|t_r|) \sum_{n=1}^\infty \frac{(-1)^n(2n)!}{(1-2n)(n!)^2(4^n)} \left( \frac{t_i}{1-|t_r|} \right)^{2n}
    =: S_1(t_0).
\end{align}
We achieve sufficient accuracy by applying this series evaluation to
points inside the rhombus described by $4|t_i| < 1-|t_r|$, evaluating
the series to $n=11$. For $k>1$ the upward recursions of \cite{Tornberg2006}
are stable, such that
\begin{align}
  P_1^{1}(t_0)
  &= \log(1+|t_r|+\sqrt{(1+|t_r|)^2 + t_i^2}) \\ & \quad - 
    \begin{cases}
      \log S_1(t_0), & \text{if} \quad 4|t_i| < 1-|t_r|,\\
      \log(-1+|t_r|+\sqrt{(-1+|t_r|)^2 + t_i^2}), & \text{otherwise} ,
    \end{cases} \\
  P_2^{1}(t_0) &= u_2-u_1 - \frac{b}{2} P_1^{1}(t_0), \\
  P_{k+1}^{1}(t_0) &= \frac{1}{k} \pars{
                   u_2-(-1)^{n-1} u_1 + \frac{(1-2k)b}{2} P_{k}^{1}(t_0) - (k-1) c P_{k-1}^{1}(t_0)
                   } .
\end{align}

For $m=3$, the formula from \cite{Tornberg2006}
for $k=1$ contains a conditional
statement for points on the real axis ($d=0$), where the pair of two
conjugate roots $\braces{t_0, \overline{t_0}}$ merge into a double root.  In
finite precision, this property causes a region around the real axis
where the formula is inaccurate, namely two cones 
extending outwards from the endpoints $\pm 1$.
To get high accuracy there, we consider the
integral with shifted limits,
\begin{align}
  P_1^{3}(t_0) = \int_{-1-t_r}^{1-t_r} \pars{s^2+t_i^2}^{-3/2} \dif s
  = \left[ S_3(s) \right]_{-1-t_r}^{1-t_r}~,
\end{align}
where the antiderivative $S_3$ is evaluated by forming a Maclaurin series
of the integrand in $t_i$, and then integrating that series exactly in
$s$, thus
\begin{align}
  S_3(s) 
  = \int \pars{s^2+t_i^2}^{-3/2} \dif s
  \;=\; \frac{\abs{s}}{s^3}\sum_{n=0}^\infty 
  \left(-\frac{1}{4}\right)^{n+1}\!\!
  \frac{ (2 n+2)!}{((n+1)!)^2}
  \pars{\frac{t_i}{s}}^{2n} .
\end{align}
We find that we get sufficiently high accuracy in the region of
interest by truncating this series to 30 terms, denoted
by $\tilde S_3(s)$, and finally evaluating the integrals for $m=3$ as
\begin{align}
  P_1^{3}(t_0) &=
  \begin{cases}
    \tilde S_3(1-t_r) - \tilde S_3(-1-t_r), & \text{if} \quad 0 < \frac{\abs{t_i}}{\abs{t_r}-1} < 0.6, \\
    \frac{1}{2d}\pars{\frac{b+2}{u_2} - \frac{b-2}{u_1}}, & \text{otherwise} ,
  \end{cases}
  \\
  P_2^{3}(t_0) &= \frac{1}{u_1} - \frac{1}{u_2} - \frac{b}{2} P_1^{3}(t_0), \\
  P_{k+1}^{3}(t_0) &=  P_{k-1}^{1}(t_0) - b P_{k}^{3}(t_0) - c P_{k-1}^{3}(t_0).
\end{align}
The series evaluation can be optimized by defining a succession of
narrower cones, using fewer terms in each cone.

For $m=5$ and $k=1$ we need to repeat the process of finding a power
series that we can use in cones around the real axis, extending from
the endpoints. We now consider
\begin{align}
  P_1^{5}(t_0) = \int_{-1-t_r}^{1-t_r} \pars{s^2+t_i^2}^{-5/2} \dif s,
\end{align}
and the antiderivative of the Maclaurin series of the integrand is
\begin{align}
  S_5(s) 
  = \int \pars{s^2+t_i^2}^{-5/2} \dif s
  \;=\;
  \frac{\abs{s}}{s^5}\sum_{n=0}^\infty 
  \frac{(-1)^{n+1} \left(4 n^2+8 n+3\right)}{3 (n+2) 2^{2 n + 1}}
  \frac{ (2n)! }{ (n!)^2  }
  %\binom{2 n}{n}
  \pars{\frac{t_i}{s}}^{2n} .
\end{align}
Truncating this series to a maximum of 50 terms, denoted by
$\tilde S_5(s)$, we now evaluate the integrals for $m=5$ as
\begin{align}
  P_1^{5}(t_0) &=
  \begin{cases}
    \tilde S_5(1-t_r) - \tilde S_5(-1-t_r), & \text{if} \quad 0 < \frac{\abs{t_i}}{\abs{t_r}-1} < 0.7, \\
    \frac{1}{3d} \pars{ \frac{b+2}{2 u_2^3} - \frac{b-2}{2 u_1^3} + 2 P_1^{3}(t_0) }, & \text{otherwise} ,
  \end{cases}
  \\
  P_2^{5}(t_0) &= \frac{1}{3 u_1^3} - \frac{1}{3 u_2^3} - \frac{b}{2} P_1^{5}(t_0), \\
  P_{k+1}^{5}(t_0) &=  P_{k-1}^{3}(t_0) - b P_{k}^{5}(t_0) - c P_{k-1}^{5}(t_0).
\end{align}
Note that our expression for $P_2^{5}$ is different from the one
found in \cite{Tornberg2006}; ours avoids a conditional.

% RRRRRRRRRRRRRRRRRRRRRRRRRRRRRRRRRRRRRRRRRRRRRRRRRRRRRRRRRRRRRRRRRRRRRRRRRRR
\subsection{Finding the roots of $R(t)^2$}
\label{sec:finding-roots-3d}

\begin{figure}[t]
  \centering
  % Generated by: matlab/show_3d_mapping.m
  \includegraphics[width=0.3\textwidth]{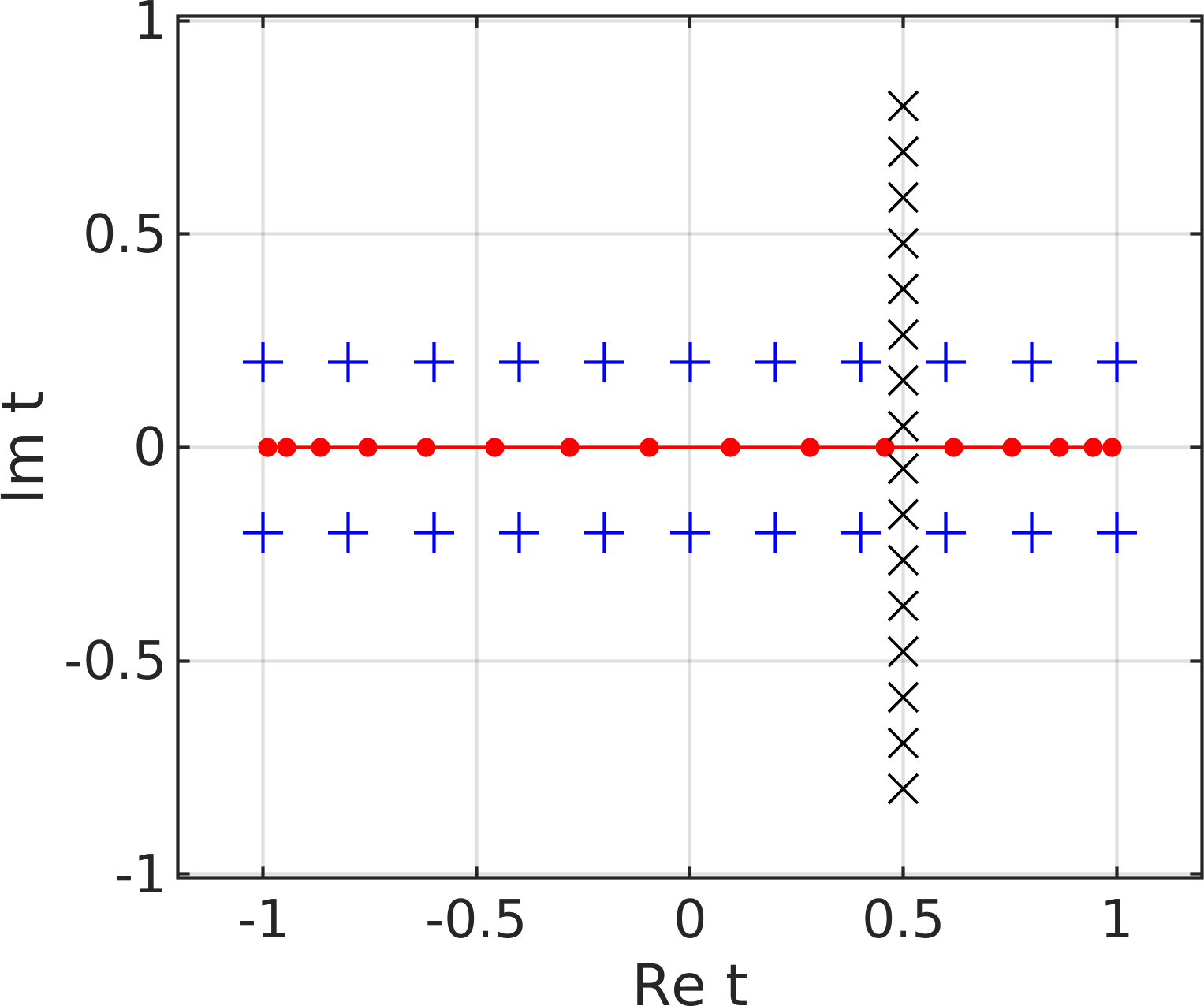}
  \hspace{0.03\textwidth}
  \includegraphics[width=0.3\textwidth]{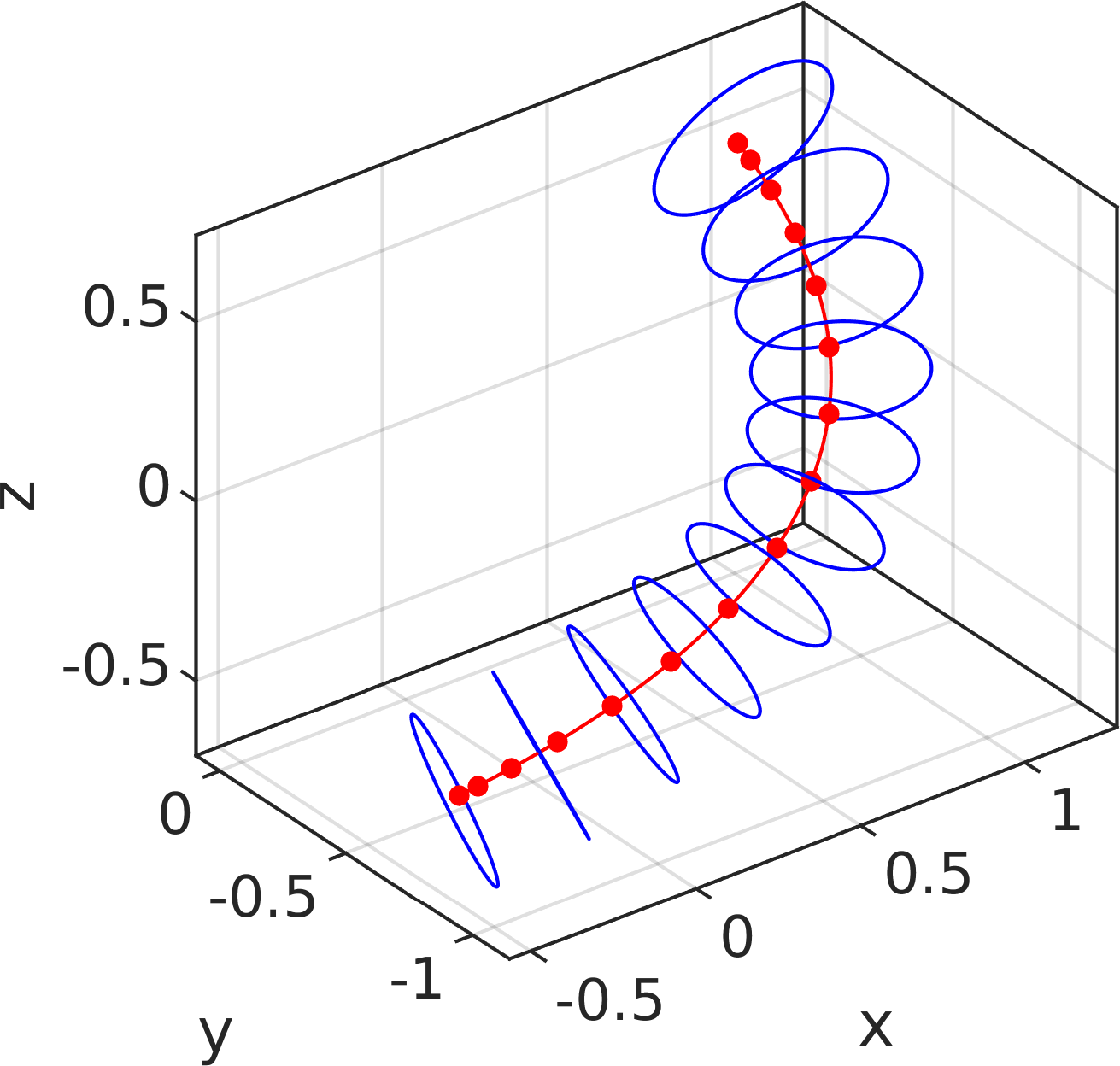}
  \hspace{0.03\textwidth}  
  \includegraphics[width=0.3\textwidth]{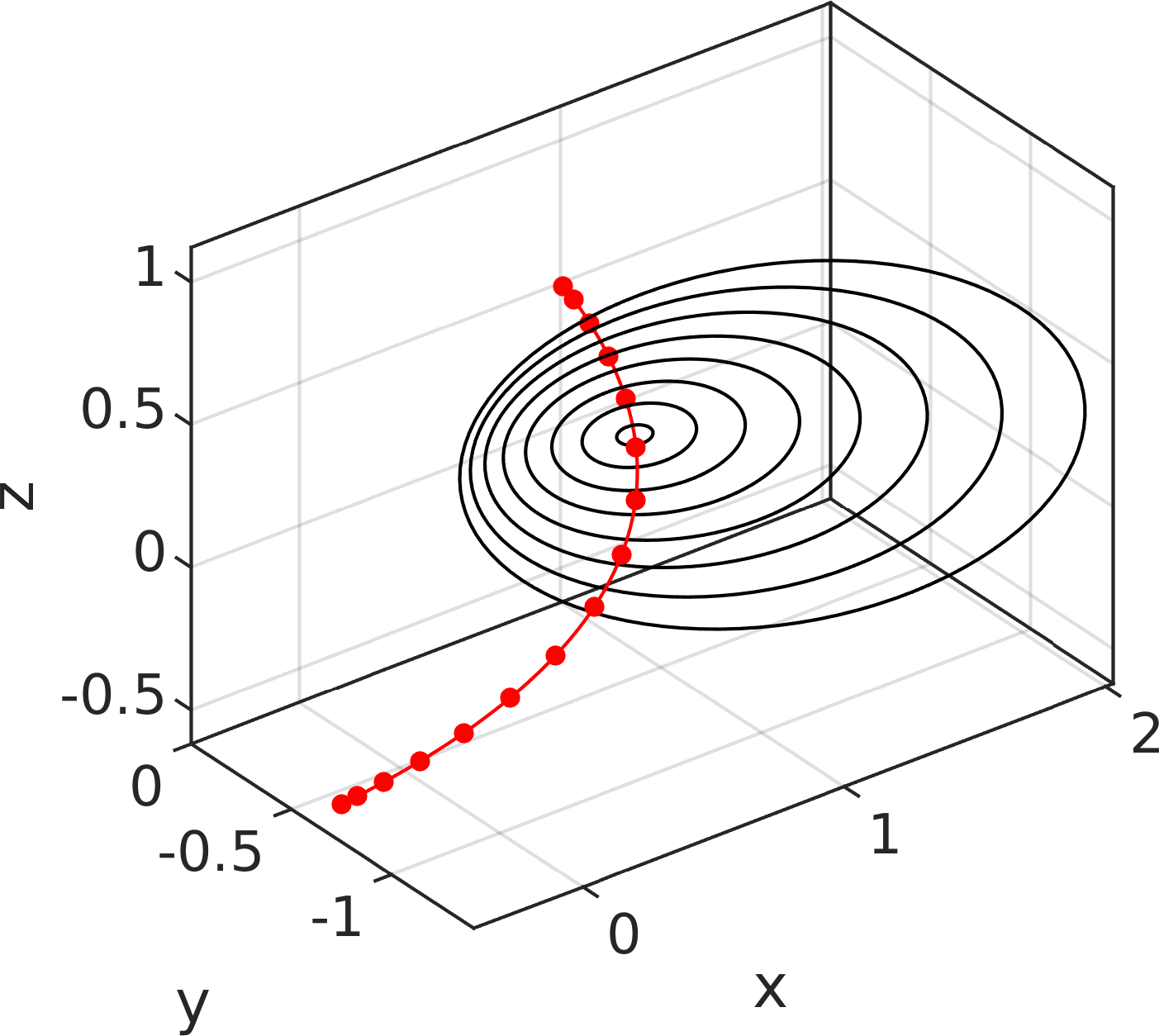}  
  \caption{Correspondence between complex roots of the 3D squared distance
    function $R(t)^2$ in \eqref{eq:R2} and target points in $\mathbb R^3$, close to a curve. 
    The red dots (left) are the Legendre nodes on $[-1,1]$, mapped to a curve in space (center \& right).
    The roots
    marked with a blue \texttt{+} in the left figure correspond to the
    blue circles in the middle figure, while the roots marked with
    a black \texttt{x} in the left figure correspond to the black circles
    in the right figure.}
  \label{fig:poles_3d}
\end{figure}

In three dimensions, the nearby roots of the squared distance function
\eqref{eq:R2} do not correspond to a single point in space, as was the
case in 2D (see \cref{fig:all_poles}). Instead, each complex conjugate
pair of roots $\{t_0, \overline{t_0}\}$ near $[-1,1]$ corresponds to a circle in
space looped around $\Gamma$. To see this, let $\v g_r$ and $\v g_i$ be the
real and imaginary parts of the complexification of the
parametrization,
\begin{align}
  \v g(t) = \v g_r(t) + i \v g_i(t).
\end{align}
Then, the roots of \eqref{eq:R2} satisfy
\begin{align}
  R(t)^2 %= \abs{\v g(t) - \v x}^2
  %= \abs{\v g_r(t) - \v x + i \v g_i(t)}^2
  = \abs{\v g_r(t) - \v x} + 2 i \v g_i(t) \cdot \pars{\v g_r(t) - \v x} - \abs{\v g_i(t)}^2
  = 0 .
\end{align}
For this to hold in both real and imaginary components for a given
$t$, the point $\v x$ must satisfy
\begin{align}
  \left\{
  \begin{array}[l]{rcl}    
    \v g_i(t) \cdot (\v g_r(t) - \v x) &=& 0,\\
    \abs{\v g_r(t) - \v x} &=&  \abs{\v g_i(t)}^2.
  \end{array}
  \right .
\end{align}
The above equations describe the intersection of a plane with a
sphere, and are satisfied by the circle of radius $\abs{\v g_i(t)}$
that is centered at $\v g_r(t)$, and lies in the plane normal to
$\v g_i(t)$. All points in $\mathbb R^3$ corresponding to the root $t$
lie on this circle. For an illustration of this, see
\cref{fig:poles_3d}.

In order to construct a quadrature for a given target point $\v x$, we
need to find the roots $t$ to \eqref{eq:R2}, which are the points
satisfying the complex equation
\begin{align}
  R(t)^2 = 
  \pars{g_1(t)-x_1}^2 + \pars{g_2(t)-x_2}^2 + \pars{g_3(t)-x_3}^2 = 0.
\end{align}
We form a
polynomial approximation $\poly_n[\v g](t)$ of $\v g(t)$ using the existing
real Gauss--Legendre nodes $\{\v g(t_j) \}$. We then need to find
the roots of the polynomial
\begin{align}
  \tilde R(t)^2 := 
  \pars{\poly_n[g_1](t)-x_1}^2 +
  \pars{\poly_n[g_2](t)-x_2}^2 +
  \pars{\poly_n[g_3](t)-x_3}^2 .
  \label{eq:R2tilde}
\end{align}
These can be found using either of the root-finding methods discussed in
section \ref{sec:finding-poles}. We here limit ourselves to finding
the root pair $\{t_0,\overline{t_0}\}$
closest to $[-1,1]$, using Newton's method. We find that a
stable initial guess can be obtained using a linear mapping in the
plane containing $\v x$ and the two closest discretization points on
$\Gamma$, which we denote by $\v g_j := \v g(t_j)$ and
$\v g_k :=\v g(t_k)$. The initial guess $t_{\mbox{\tiny init}}$ is then set to one
of the two points satisfying
\be
  \frac{\Re t_{\mbox{\tiny init}}  - t_j}{t_k-t_j} = \frac{
                                        \pars{\v x - \v g_j} \cdot \pars{\v g_k - \v g_j}}
                                        {\abs{\v g_k - \v g_j}^2},
                                        \qquad \mbox{ and } \quad
  \frac{\abs{t_{\mbox{\tiny init}} - t_j}}{\abs{t_k-t_j}}
                                      =
                                        \frac{
                                        \abs{\v x - \v g_j} }{ \abs{\v g_k - \v g_j} }.
\ee
This is the exact root for the case of $\Gamma$ a straight line.

Compared to the 2D case, we find that the accuracy of our method in 3D
is more sensitive to the implementation details of the root-finding
algorithm, especially for roots very close to the interval
$[-1,1]$. Below are a number of noteworthy observations:

\begin{itemize}
  
\item The convergence of Newton's method will deteriorate for roots
  that are close to the real axis (or on it, when $|\Re t_0| > 1$).
  This is because the root $t_0$ and its conjugate $\overline{t_0}$
  will merge into a double root at $\Im t_0=0$, which reduces Newton's
  method to linear convergence. In finite precision, we find that this
  hampers the convergence of Newton's method also for roots that lie
  close the real axis. To ameliorate this problem, we let our
  root-finding routine switch to Muller's method \cite{Press2007} if
  Newton has not converged within a certain number of iterations (we
  use 20). In our tests, this appears to be a stable root-finding
  process for all points requiring special quadrature, such that we
  can apply direct quadrature to points where neither Newton's nor
  Muller's methods converge. For additional robustness, one could
  also switch to the comrade matrix method in such cases.

\item Just as in the 2D case, once the root $t_0$ is known
  we can determine if special quadrature is needed using
  the Bernstein radius $\rho(t_0)$ criterion at the end of
  \cref{sec:finding-poles}.

\item We find that it is important for accuracy to find the roots
  using the approximation \eqref{eq:R2tilde} of $R^2$. If instead
  $R^2$ is approximated as $\poly_n[R^2](t)$, then 1--2 digits of
  accuracy are lost close to $[-1,1]$. If roots are computed from this
  form, for example if the comrade matrix method is used, then they
  can be polished by taking one or two Newton steps of
  \eqref{eq:R2tilde}, thereby recovering the lost accuracy.

\item For panels discretized with more than $n=16$ points, we find
  that the best accuracy is achieved if the roots are found using
  Legendre expansions truncated to 16 terms. In general, our
  recommendation is to not use an underlying discretization with $n$
  larger than 16, and then upsample to more points when evaluating the
  SSQ, if necessary (see \cref{r:upsampling}).
  
\end{itemize}

\section{Summary of algorithm}
\label{sec:summary-algorithm}

Below is a step by step summary of our algorithm for evaluating
the potential generated by a single open curve panel $\Gamma$, 
in 2D or 3D, discretized using $n$ Gauss--Legendre points.
Specifically, we describe how to get the weight vector $\v \lambda$ as
defined in \cref{s:adjoint}, from which one of the line integrals
\eqref{eq:complex_L}, \eqref{eq:complex_m} or \eqref{eq:3dint_param}
can be approximated by
$I= \v \lambda^T \v f$ for any sample vector $\v f$ of
densities (or density
times smooth geometric factors).
Recall that potentials are defined by such line integrals,
possibly in the 2D case via complexification as shown in
Appendix~\ref{sec:compl-vari-kern}.
(We omit the optional finding of the panel's Schwarz singularity,
and use of the robust comrade-matrix method given in \cref{sec:finding-poles},
since we find that it is not needed in practice.)

There is an overall algorithm option (flag) which may take the
values ``no upsampling'', ``upsampling'' or ``upsampling with
upsampled direct''. Either variety of upsampling is more expensive,
but can increase accuracy; the latter avoids some special
weight computations so is the cheaper of the two.

\paragraph{Precomputations for panel $\Gamma$:}
\begin{enumerate}
\item
Choose a suitable polynomial basis (e.g.\ Legendre or
  Chebyshev; we use the former).   
  The analytic continuation of the chosen basis functions must be
  simple to evaluate (as is the case for Legendre).
Form the degree $n-1$
polynomial approximation of the panel parameterization
  $\poly_n[\v g](t)$ in this basis;
this can be done by solving a $n\times n$ linear system 
with right-hand side the
coordinates of the nodes $\v y_j = \v g(t_j)$, $j=1,\dots,n$.
  
%  \todo{include $\gamma'$ too?} no, only if Schwarz.

%\item Find the preimage of the nearest Schwarz singularity, which is the point $s$ satisfying:
%  \begin{align}
%    \text{2D: } & g_1'(t) + i g_2'(t) = 0, \\
%    \text{3D: } & \abs{\v g'(t)}^2 = 0.
%  \end{align}
%  This can be done numerically using $\poly_n[\v g'](t)$.
%*** why is Schwarz needed at all?

\item Given the desired tolerance $\epsilon$, set
the critical Bernstein radius $\rho_\epsilon$ via \eqref{Reps}.
%This implies that target points $z$ with $\rho(z)>\Reps$ are expected
%to have relative accuracy roughly better than $\epsilon$, according
%to \eqref{GLexp} and \cref{s:direct}.
\end{enumerate}

\paragraph{For each given target point $\v x$:}
\begin{enumerate}
\item Use a cheap criterion to check if $\v x$ is a candidate for
  near evaluation, based on the minimum distance between the target
  point and the panel nodes:
  \begin{align}
    \v x \text{ candidate if } \min_{\v y_i \in \Gamma} \abs{\v y_i - \v x} < D,    
  \end{align}
  where $D$ is a multiple of the panel length $h$ (at $n=16$ we use $D=h$ for full accuracy).
  If not a candidate, use the direct rule \eqref{direct} then exit;
  more specifically
  the weights are
  \be
  \lambda_j = w_j |\v g'(t_j)| K(\v x,\v y_j)~,\qquad j=1,\ldots,n.
  \label{directlam}
\ee
\item
Find the complex target preimage $t_0$ from the basis approximation to
$\v g(t)$:
In 2D, use Newton's method applied to \eqref{eq:Qtilde} to find the root $t_0$.
In 3D, use Newton (complemented by Muller's method) applied
to \eqref{eq:R2tilde} to find the root pair $\{t_0,\overline{t_0}\}$.

\item If $\rho(t_0) \ge \Reps$, use the direct rule \eqref{directlam} and exit.
If no upsampling, set $\tilde n=n$, or if upsampling, $\tilde n=2n$.
If ``upsampling with upsampled direct'',
check if  $\sqrt{\Reps} \le \rho(z) < \Reps$,
and if so use the degree $n-1$ approximants to
resample the kernel and density to the $m$ new nodes
and use the upsampled $m$-node version of the direct formula
\eqref{directlam}, then exit.

\item We now do special quadrature for evaluation at $\v x$.
More precisely, we use the singularity swap method
with parameter target $t_0$ and panel $[-1,1]$, as follows.

For 2D, compute the monomial integral vector $\v p$, being
$\{p_k^m(t_0)\}_{k=0}^{\tilde n}$,
or for the logarithmic case $\{q_k(t_0)\}_{k=0}^{\tilde n}$,
via the recurrences \eqref{p11}--\eqref{qkz} applied to $[-1,1]$.
For 3D, it is $\{P_k^m(t_0)\}_{k=0}^{\tilde n}$,
computed via the various recurrences of \cref{sec:recurs-form-three}.

\item 
Solve the adjoint Vandermonde system \eqref{adjsys}
where the $\tilde n$-by-$\tilde n$ matrix $A$ involves the nodes
on $[-1,1]$, and the right-hand side is the $\v p$ just computed.
We find that the Bj\"orck-Pereyra algorithm \cite{Bjorck1970}
is even faster than using a precomputed LU decomposition of $A$.

\item
Finally, one must apply corrections to turn $\v \lambda$ into a set of weights
that act on the samples of $f$, rather than the singularity-swapped
smooth functions involving $h(t)$ or $H(t)$.
First multiply each weight $\lambda_j$ by the Jacobian factor $|\v g'(t_j)|$, according to \eqref{h_def} or \eqref{h_def3d}. Then:

% these were the ``finalweights'' formulae
\begin{itemize}
\item For the log case in 2D,
according to \eqref{eq:logint_omega},
add $w_j \log (Q(t_j)/(t_j-t_0))$ to each weight $\lambda_j$.
\item For the power-law case in 2D,
according to \eqref{eq:mint_omega},
multiply each weight by $((t_j-t_0)/Q(t_j))^m$.
\item In 3D,
according to \eqref{H},
multiply each weight by $\bigl( (t_j-t_0)(\overline{t_j-t_0})/R(t_j)^2
\bigr)^{m/2}$.
\end{itemize}

\end{enumerate}

% NNNNNNNNNNNNNNNNNNNNNNNNNNNNNNNNNNNNNNNNNNNNNNNNNNNNNNNNNNNNNNNNNNNNNNNNNNNNNN
\section{Numerical tests}
\label{sec:numerical-results}

In \cref{sec:two-dimensions} we already performed basic
comparisons of prior methods and the proposed singularity swap quadrature in the 2D case.  Now we give results of more extensive 2D
tests and the 3D case.

The 2D and 3D quadrature methods of this paper have been implemented
in a set of Matlab routines. These are available online
\cite{github-linequad}, together with the programs used for all the
numerical experiments reported here. The computationally intensive
steps of the 3D quadrature have been implemented in C, with basic
OpenMP parallelization, and interfaced to Matlab using MEX. This
allows very rapid computation of quadrature weights: on the computer
used for the numerical results of this section, which is a 3.6 GHz
quad-core Intel i7-7700, we can find the roots (preimages) of
$4.7 \times 10^6$ target points per second, and compute
$3.2 \times 10^6$ sets of target-specific quadrature weights per
second, for $n=16$ (without upsampling).

\subsection{Evaluating a layer potential in two dimensions}
\label{sec:eval-layer-potent}

To study quadrature performance when evaluating an actual layer
potential, we solve a simple test problem using an integral equation
method, then evaluate the solution using 
either the Helsing--Ojala method or our proposed method.
Following \cite{Helsing2008} we test the Laplace equation $\Delta u = 0$ in the
interior of the starfish-shaped domain $\Omega$ given by
$\gamma(t) = (1+0.3\cos(5t)) e^{it}$, $t\in[0, 2\pi)$, and we
choose Dirichlet
boundary conditions $u=u_e$ on $\bdry$ with data
$u_e(\zeta)=\log\abs{3+3i-\zeta}$. We have picked this data to be
very smooth, since the errors then are due to quadrature, rather
than density resolution, making it easier to compare
methods.

We discretize $\bdry$ using composite Gauss--Legendre quadrature, with
$n=16$ points per panel. The panels are formed using the following
adaptive scheme. The interval $[0, 2\pi)$ is recursively bisected into
segments that map to panels, until all panels satisfy a resolution
condition, and all pairs of neighboring segments differ in length by a
ratio this is no more than 2 (i.e.\ level-restriction). The resolution
condition is based on the Legendre expansion coefficients
$\{\hat \gamma'_k\}$ of the panel derivative $\gamma'(t)$: a panel
is deemed resolved to a tolerance $\epsilon$ if
$\max(\abs{\hat \gamma'_{n-1}}, \abs{\hat \gamma'_n}) < \epsilon \norm{\v{\hat \gamma}'}_\infty$.
While somewhat ad-hoc, this produces a solution with a relative error 
that is comparable to $\epsilon$, for a smooth problem  
(see \cref{fig:laplace_lores,fig:laplace_near}).

The solution to the Laplace equation is represented using the double
layer potential \eqref{eq:lap_dbl_cpx}, which results in a second kind
integral equation in $\rho$. This is solved using the Nystr\"om method
and the above discretization, which is straightforward due to the
layer potential being smooth on $\bdry$. For further details on this
solution procedure, see, for example, \cite{Helsing2015} or
\cite{Helsing2008}. Once we have $\rho$, we evaluate the layer
potential using quadrature at sets of points in $\Omega$, and compute
the error by comparing with the exact solution, $u_e$.
We compare two methods:
\begin{enumerate}
\item The Helsing--Ojala (HO) method summarized in \cref{s:helsing}.
\item The proposed singularity swap quadrature (SSQ),
as outlined in \cref{sec:summary-algorithm}.
\end{enumerate}
%\todo{it's weird that this eps is fixed even though different eps used below}
In both methods we test
``no upsampling'' with $n=16$,
and ``upsampling with upsampled direct'' with $\tilde n=32$.
%in order to avoid the errors associated
%with activating the interpolatory quadrature too far away (see
%\cref{fig:compare_2d_16,fig:compare_2d_32}).
%  ahb thinks not needed.

In our first test, shown in \cref{fig:laplace_lores},
we solve the problem using a quite coarse discretization
(adaptive with $\epsilon=10^{-6}$),
choose $\Reps=1.8$ (corresponding via \eqref{Reps} to a more conservative
error of $10^{-8}$),
and evaluate the 
solution on a $300\times 300$ uniform grid covering the domain.
Far from the boundary the solution shows around the expected 6 digit
accuracy.
Notice that, since only eight panels are used, they are highly
curved.
HO gets only 2 digits at $\tilde n=16$ and scarcely more at $\tilde n=32$,
whereas SSQ gets 3 digits at $\tilde n=16$ and the ``full'' 6 digits at
$\tilde n=32$.
This shows that upsampled SSQ achieves the full expected accuracy
using the efficient discretization chosen for this accuracy, whereas HO
would require further panel subdivision beyond that needed to achieve
the requested Nystr\"om accuracy.

As a second test, we solve the same problem, this time using a fine
discretization (adaptive with $\epsilon=10^{-14}$),
and pick $\Reps=3$,
corresponding via \eqref{Reps} to around $\epsilon\approx 10^{-15}$
at $n=16$.
The result is 32 panels (similar to the 35 panels used in \cite{Helsing2008}).
First we evaluate the solution on a uniform
$250 \times 250$ grid covering all of $\Omega$. Then we evaluate the
solution on the slice of $\Omega$ defined by the mapping $\gamma(t)$
of the square in $\mathbb C$ defined by $\Re t \in [1.66\pi, 1.76\pi]$
and $\Im t \in [d_{\text{min}}, 0.15]$.  Here we use two grids in $t$:
one uniform $250 \times 250$ with $d_{\text{min}}=10^{-3}$, and one
$250\times 250$ that is uniform in $\Re t$ and logarithmic in $\Im t$,
with $d_{\text{min}}=10^{-8}$.
Here we only show the results when using upsampling, since both methods
require that to achieve maximum accuracy
(without upsampling, HO gets 8 digits and SSQ 9 digits).
The results of this test are
shown in \ref{fig:laplace_near}. Both schemes are able to achieve high
accuracy here: both get 13 digits close to the boundary,
and both drop to 11-12 digits when tested extremely close to the boundary.
However,
contrary to the previous test, the proposed SSQ does lose around
1 digit relative to HO, in the distance range $10^{-3}$ to $10^{-8}$.
We do not have an explanation for this loss.

To summarize this experiment, SSQ is much better than HO for
panels that are resolved to lower accuracies, because these
panels may be quite curved.
When panels are resolved to close to machine precision
the methods are about the same, with HO attaining a slightly smaller
error.

\begin{figure}[p]
  % Generated by: compare_lores
  \centering
  \includegraphics[width=0.3\textwidth]{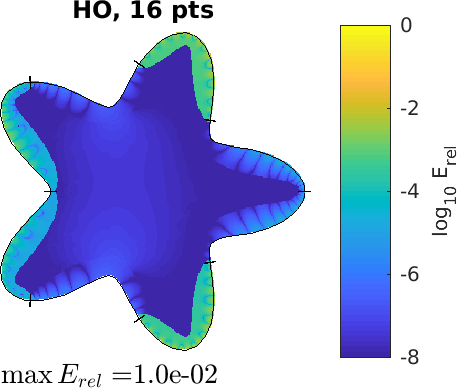}
  \hspace{0.13\textwidth}
  \includegraphics[width=0.3\textwidth]{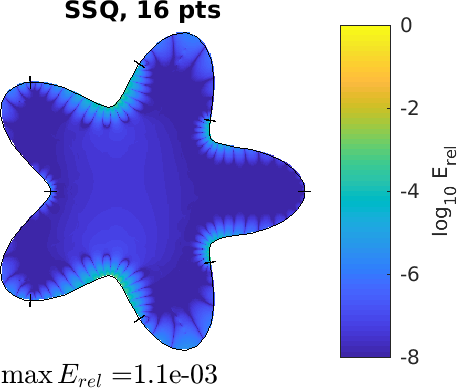}
  \\
  \vspace{0.25cm}
  \includegraphics[width=0.3\textwidth]{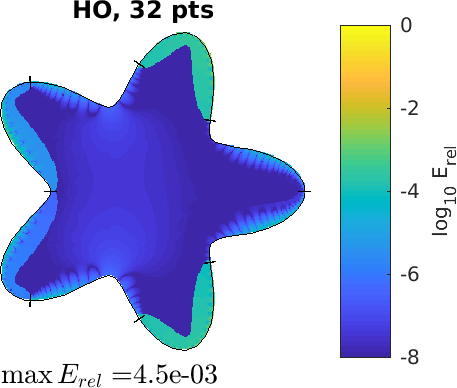}
  \hspace{0.13\textwidth}  
  \includegraphics[width=0.3\textwidth]{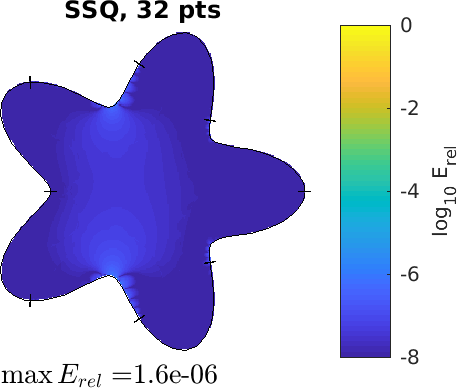}      
  \caption{Comparison of 2D quadratures when evaluating Laplace double
    layer potential using a coarse discretization
    (adaptive %panelization
    with $\epsilon=10^{-6}$, giving 8 panels).
    Left column: Helsing--Ojala (HO). Right column: proposed singularity swap
    quadrature (SSQ).
    Top row: no upsampling. Bottom row: upsampling.}
  \label{fig:laplace_lores}
\end{figure}

\begin{figure}[p]
  % Generated by: show_near
  \centering
  \includegraphics[height=0.25\textwidth]{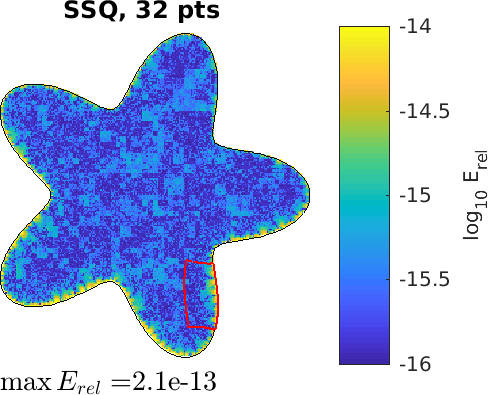}
  \hspace{0.05\textwidth}
  \includegraphics[height=0.25\textwidth]{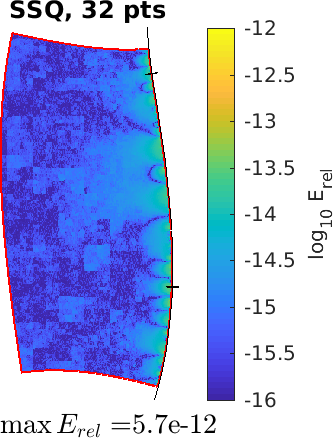}
  \hspace{0.05\textwidth}
  \includegraphics[height=0.25\textwidth]{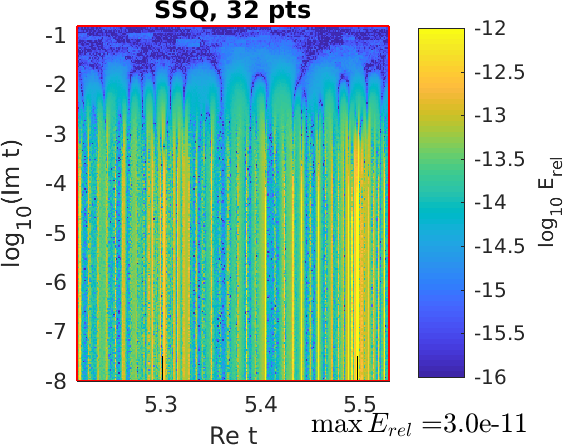}
  \\
  \vspace{0.25cm}
  \includegraphics[height=0.25\textwidth]{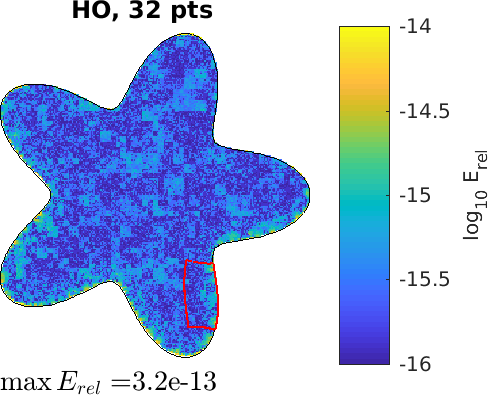}
  \hspace{0.05\textwidth}  
  \includegraphics[height=0.25\textwidth]{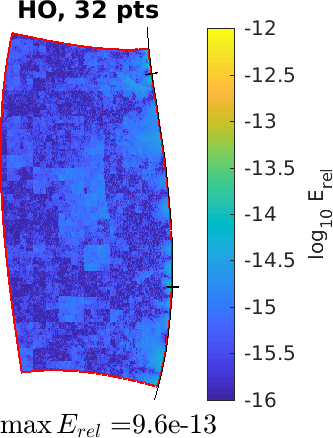}
  \hspace{0.05\textwidth}  
  \includegraphics[height=0.25\textwidth]{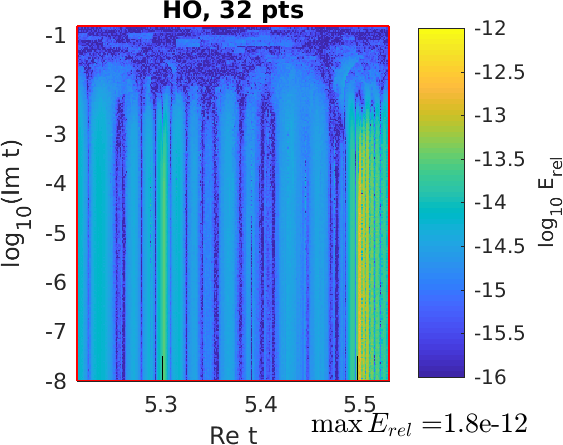}        
  \caption{Comparison of 2D quadratures when evaluating Laplace double
    layer potential using a high-accuracy discretization
    (adaptive %panelization
    with $\epsilon=10^{-14}$, giving 32 panels) and upsampling.
    Top row: proposed singularity swap quadrature (SSQ). Bottom row: Helsing--Ojala (HO).
    Near region (center) is
    marked red in the left plots. Right plot shows error versus root
    location in near region, with exponential grading of the imaginary
    part, proportional to distance. Note the different
    color scale used in the near region.}
  \label{fig:laplace_near}
\end{figure}

\subsection{Evaluating the field due to a slender body in three dimensions}

To test our method in 3D, we evaluate the slender body approximation
of Stokes flow around a thin filament. In this approximation, the flow
due to a fiber with centerline $\Gamma$ and radius $\varepsilon\ll 1$ is given
by the line integral (see e.g. \cite{Mori2018})
\begin{align}
  \v u(\v x) = \int_\Gamma \pars{
  \stokeslet(\v x - \v y) 
  + \frac{\varepsilon^2}{2}\doublet(\v x - \v y) 
  } \v f(\v y) \dif s(\v y)  .
  \label{eq:slender_body_int}
\end{align}
Here $\v f$ is given force data, defined on the centerline, $\stokeslet$
is the Stokeslet kernel, defined as
\begin{align}
  \stokeslet(\v R) = \frac{\v I}{|\v R|} + \frac{\v R \v R^T}{|\v R|^3},
\end{align}
and $\doublet$ is the doublet kernel, defined as
\begin{align}
  \doublet(\v R) = \frac{1}{2}\Delta\stokeslet(\v R)
  = \frac{\v I}{|\v R|^3} - 3\frac{\v R \v R^T}{|\v R|^5} .
\end{align}
Note that $\stokeslet$ and $\doublet$ are tensors, and $\v I$
is the $3\times 3$ identity.
To apply our quadrature, we first write \eqref{eq:slender_body_int} in
kernel-split form,
\begin{align}
  \v u(\v x) &= I_1 + I_3 + I_5,
\end{align}
where
\begin{align}
  I_1 &= \int_\Gamma \frac{\v f(\v y)}{{|\v R|}} \dif s(\v y),  \label{eq:slender_I1}\\
  I_3 &= \int_\Gamma \frac{\pars{\v R \v R^T + \varepsilon^2 \v I/2} \v f(\v y)}{{|\v R|^3}} \dif s(\v y),   \label{eq:slender_I3}\\
  I_5 &= -\frac{3\varepsilon^2}{2} \int_\Gamma \frac{\v R \v R^T \v f(\v y)}{{|\v R|^5}} \dif s(\v y), \label{eq:slender_I5}
\end{align}
and $\v R := \v x - \v y$. Once in this form, one can compute modified
weights for $I_1$, $I_3$, and $I_5$ using the proposed SSQ quadrature.

For our tests, we use the force data $\v f(\v y) = \v y$ and $\varepsilon=10^{-3}$.
We let
$\Gamma$ be the closed curve shown in \cref{fig:long_fiber_field},
which is described by a Fourier series with decaying random
coefficients,
\begin{align}
  \v\gamma(t) &= \Re \sum_{k=-20}^{20} \frac{\v c(k)}{5 + |k|} \hat{\v\gamma}(k) e^{2\pi i k t}, \quad t \in [0, 1),
                \label{eq:squiggle}
\end{align}
where the components of $\v c(k)$ are complex and normally distributed
random numbers\footnote{Matlab: \texttt{rng(0); c =
    (randn(3,41)+1i*randn(3,41));}}.
%, $c_i(k) \sim \mathcal N(0,1) + i\mathcal N(0,1)$. 
We subdivide $\Gamma$ into panels by recursively bisecting the curve
until each panel is resolved to a tolerance $\epspan$, using
a criterion similar to that used in 
\cref{sec:eval-layer-potent}:
A panel is deemed resolved if the expansion coefficients $\{\hat s_k\}$
of the speed function $s(t)=\abs{\v\gamma'(t)}$ satisfy
$\max(\abs{\hat s_{n-1}}, \abs{\hat s_n}) < \epsilon \norm{\v{\hat s}}_\infty$.
Each panel is then discretized using
$n=16$ Gauss--Legendre nodes.

\begin{figure}[htbp]
  % demo_long_fiber.m, case 1
  \centering
  \includegraphics[height=0.25\textwidth]{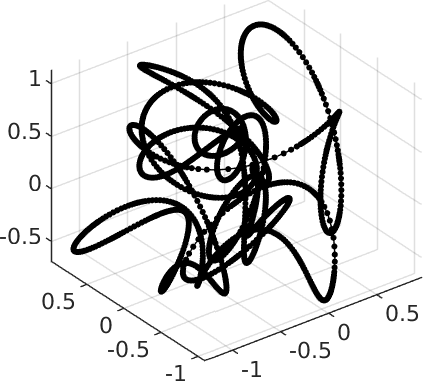}
  \hspace{0.05\textwidth}  
  \includegraphics[height=0.3\textwidth]{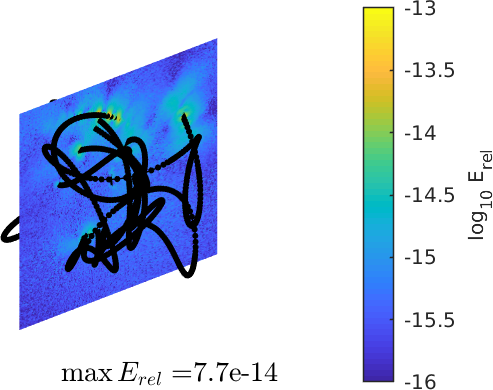}  
  \caption{Left: The curve in 3D described by \eqref{eq:squiggle}, showing
    the discretization nodes (for $\epspan=10^{-10}$) as dots. Right:
    Relative error when evaluating the slender body field
    \eqref{eq:slender_body_int} on the shown slice using
    the proposed SSQ method.}
  \label{fig:long_fiber_field}
\end{figure}

\subsubsection{Comparison to adaptive quadrature}

As a comparison for our method, and also to compute reference values,
we have implemented a scheme for nearly singular 3D line quadrature based on
per-target adaptive refinement: For a given target point $\v x$,
nearby panels are recursively subdivided until each panel $\Gamma_i$
satisfies $\min_{\v y \in \Gamma_i} |\v x - \v y| < h_i$, where $h_i$
is the arc length of $\Gamma_i$. Each newly formed panel is
discretized using $n=16$ Gauss--Legendre points, and the required data
$(\v\gamma, |\v\gamma'|, \v f)$ is interpolated to these points from
the parent panel using barycentric Lagrange interpolation
\cite{Berrut2004}. After this new set of panels is formed, the field
\eqref{eq:slender_body_int} can be accurately evaluated at $\v x$
using the Gauss--Legendre quadrature weights, since all source panels
then are far away from $\v x$, relative to their own length.  This
scheme is relatively simple to implement, but has the drawback of
requiring an increasingly large amount of interpolations and kernel
evaluations as $\v x$ approaches $\Gamma$.

In an attempt to obtain a fair comparison of runtimes between the
adaptive quadrature and the proposed SSQ, we will only report on the
time spent in codes that are of similar efficiency, in terms of
implementation. For the adaptive quadrature, we will therefore report
the time spent on interpolations (reported as $t_{\text{interp}}$),
since those are implemented using a combination of C and BLAS, and
omit the time spent on the recursive subdivisions, since that is
presently implemented in a relatively slow prototype code. For the
SSQ, we will report on the total time spent on finding roots and
computing the new quadrature weights (reported $t_{\text{weights}}$),
since all the expensive steps of that algorithm have been implemented
in C. For both methods we will also report on the time spent on near
evaluations of the Stokeslet and doublet kernels (reported as
$t_{\text{eval}}$), since those are computed using the same code. Far
field evaluation times are omitted, since they are identical.  Thus
$t_{\text{eval}}+t_{\text{interp}}$ is somewhat less than the total
time for an adaptive scheme, whereas
$t_{\text{eval}} + t_{\text{weights}}$ is the total time for our
proposed SSQ.

\subsubsection{Results}

As a first test, we evaluate the $\v u(\v x)$ velocity field
\eqref{eq:slender_body_int} on $200 \times 200$ points covering the
$xz$-slice $[-1.4,1.4] \times 0.25 \times [-1.4,1.4]$. The curve is
adaptively discretized using $\epspan=10^{-10}$, which results in 187 panels. For
each point-panel interaction, we evaluate the SSQ
using data upsampled to 32 Gauss--Legendre points, for all target
points within the Bernstein radius $\Reps=3$. Unlike the 2D case, we find that this
upsampling is necessary for achieving accurate results. The error is
measured against a reference computed using the adaptive quadrature
and a discretization with 18-point panels, created using
$\epspan=5\cdot 10^{-14}$ (this is to ensure that the grids are different, and
that the reference grid is better resolved). The resulting error field, shown in
\ref{fig:long_fiber_field}, indicates that the SSQ can recover at least 13 digits of accuracy at all the
target points. However, the largest errors are at the points
closest to the curve.

To further compare the behavior close to the curve, we compute
$\v u$ at a set of random target points all located at the
same distance $d$ away from $\Gamma$. The test setup and results are
listed in \cref{tab:fiber}. As expected, the computational costs
(kernel evaluations and runtime) of the proposed SSQ quadrature does
not vary with the distance between the target points and the curve, as
long as the target points are within the near evaluation
threshold. The adaptive quadrature, on the other hand, has costs that
grow slowly as the distance decreases, since more and more source
points are required to evaluate the integral. The SSQ also appears to be a factor of several times
cheaper, both in number of evaluations ($4$ to $7$ times less)
and in runtime ($2.5$ to $5$ times faster),
even though our timings are under-estimates for adaptive quadrature.
However, the error for the SSQ in this application
starts to grow for close distances: for instance, at $d=10^{-4}$
the table shows that it loses around 2 digits relative to the
$\epspan$ at which the panels were discretized.

\begin{rmk}
  This loss of accuracy in this application for targets
  at small distance $d$ is not
  entirely surprising. The SSQ method gives weights $\v\lambda$ at
  the given nodes for integrating
the kernels $|\v R|^{-1}$, $|\v R|^{-3}$, and $|\v R|^{-5}$.
The resulting integrals diverge, respectively as
$\log d$, $d^{-2}$, and $d^{-4}$.
We have checked that, for these kernels
with generic densities, the relative errors of SSQ are close
to machine precision (around 13 digits),
even though
as $d\to 0$ the $\lambda_j$ are growing and oscillatory.
However, the Stokes kernels \eqref{eq:slender_I1}--\eqref{eq:slender_I5}
involve near-vanishing numerator factors
(such as $\v R \v R^T$) which partially cancel the singularities,
leading to much smaller values for the integrals.
In our method such factors are incorporated into a
modified density $\tilde f$ in \eqref{eq:3dint_param},
thus catastrophic cancellation as $d\to 0$
is inevitable for this application (this is also true for the
straight fiber method of \cite{Tornberg2006}).
This could only be avoided by a more specialized set of recursions
for the full Stokes kernels \eqref{eq:slender_I3} and
\eqref{eq:slender_I5}.
\label{r:loss}
  \end{rmk}

\begin{table}[t]
  % demo_long_fiber.m, case 2--5  
\hspace{-2ex}
  \begin{tabular}{|l|l||l|l|l|l||l|l|l|l|l|}
    \hline
    \multicolumn{2}{|c||}{Parameters}
    & \multicolumn{4}{c||}{Adaptive quad.}
    & \multicolumn{4}{c|}{Singularity swap quad. (SSQ)} \\
    \hline
    $d$ & $\epspan$ & $N_{\text{eval}}$ & $t_{\text{eval}}$ & $t_{\text{interp}}$ & $\max E_{\text{rel}}$ & $N_{\text{eval}}$ & $t_{\text{eval}}$ & $t_{\text{weights}}$ & $\max E_{\text{rel}}$ \\
    \hline
    1.0e-02 &  1.0e-10 & 2.8e+06 & 0.17 s & 0.45 s& 7.3e-14 & 6.3e+05 & 0.06 s& 0.19 s& 1.7e-13 \\ % case 3
    1.0e-02 &  1.0e-06 & 4.7e+06 & 0.30 s& 0.78 s& 4.8e-09 & 1.3e+06 & 0.12 s& 0.14 s& 4.8e-09 \\ % case 2    
    1.0e-04 &  1.0e-10 & 4.4e+06 & 0.26 s& 0.55 s& 5.9e-11 & 6.3e+05 & 0.06 s& 0.18 s& 2.0e-08 \\ % case 5
    1.0e-04 &  1.0e-06 & 6.3e+06 & 0.38 s& 0.86 s& 5.5e-08 & 1.3e+06 & 0.12 s& 0.13 s& 7.7e-05 \\ % case 4
\hline    
  \end{tabular}
  \caption{Adaptive vs proposed SSQ quadratures for 3D Stokes slender body.
    There are 5000 random target points located
    distance $d$ from the curve $\Gamma$, which is
    discretized using
    tolerance $\epspan$.
For either method,
    the number of near-field kernel evaluations is
    $N_{\text{eval}}$ and the time spent
    on them $t_{\text{eval}}$. $t_{\text{interp}}$ is
    the time interpolating to temporary grids in the adaptive
    quadrature. $t_{\text{weights}}$ is the total time
    finding the interpolatory quadrature weights in the proposed
    SSQ (rootfinding, recursions
    and Vandermonde solve). Error is measured against a reference solution.}
  \label{tab:fiber}
\end{table}

\section{Conclusions}
\label{s:conc}

We have presented an improved version
of the monomial approximation method of Helsing and Ojala \cite{Helsing2008,Helsing2009,ojalalap}
for nearby evaluation of 2D layer potentials discretized with panels.
At the cost of a single Newton search for the target preimage,
our method uses singularity cancellation to ``swap'' the singular problem
on the general curved (complex) panel for one on the standard (real)
panel $[-1,1]$, much improving the error.
Hence we call the method ``singularity swap quadrature'' (SSQ).
We emphasize that it is not the {\em conditioning} of the
monomial interpolation problem that improves---the Vandermonde matrix is exponentially
ill-conditioned in either case---rather,
the exponential {\em convergence rate} improves
markedly in the case of curved panels.
In the case of a Nystr\"om discretization adapted to a requested tolerance,
this allows SSQ to achieve this tolerance when HO would demand further
subdivision.
We gave a quantitative explanation for
both the HO and SSQ convergence rates using
classical polynomial approximation theory:
the HO rate is limited by electrostatic ``shielding'' of a
nearby Schwarz singularity by the panel itself.

We then showed that the singularity swap idea gives a close-evaluation
quadrature for line integrals on general curves in 3D,
which has so far been missing from the literature.
The (to these authors) delightful fact that
complex analysis can help in a 3D application
relies on analytically continuing the squared-distance function
\eqref{eq:R2}.
Note that QBX \cite{Barnett2014,Klockner2013} cannot
apply to line integrals in 3D,
since the potential becomes singular (in the transverse directions)
approaching the curve.
We demonstrated 3D SSQ in a slender body theory Stokes application.
We found that, at least when targets are only moderately close, comparable
accuracies to a standard adaptive quadrature are achieved with several
times fewer kernel evaluations, and, in a simple C implementation,
speeds several times faster.

As \cref{r:loss} discusses, in the Stokes application, in order
to retain full relative accuracy at arbitrarily small target distances
a new set of recurrences for these particular kernels would be essential;
we leave this for future work.
Of course, the slender body approximation breaks down for distances
comparable to or smaller than the body radius, so that the need for
full accuracy at such small distances is debatable.

\section*{Acknowledgments} 

The authors would like to thank Johan Helsing and Charlie Epstein for
fruitful discussions.
L.a.K. would like to thank the Knut and Alice Wallenberg Foundation
for their support under grant no.\ 2016.0410, and the Flatiron Institute for hosting him during part of this work.
The Flatiron Institute is a division of the Simons Foundation.

% AAAAAAAAAAAAAAAAAAAAAAAAAAAAAAAAAAAAAAAAAAAAAAAAAAAAAAAAAAAAAAAAAAAAAAAAAAAAAA
%\clearpage
\appendix
\begin{appendices}

\section{2D kernels in complex variable form}
\label{sec:compl-vari-kern}
\newcommand{\ff}{{\mathfrak f}}

In order to use either the Helsing--Ojala quadrature or the
two-dimensional singularity swap quadrature proposed in this paper,
the complex variable form of the kernel is required.
In \cite[App.~A]{Rahimian2016}) the single and double layer kernels
for several common 2D elliptic PDEs are listed.
If analytically separated into a kernel-split form \cite[Sec.~6]{Helsing2015},
these kernels can be summarized as having singularities of the
following types.
(Note that this is a subset of the forms derived in \cite[Sec.~6]{helsingjiang},
which include denominators up to $|\v y-\v x|^6$.)
\begin{align}
  \int \rho \frac{(\v y-\v x)\cdot\v f}{\abs{\v y - \v x}^2} \dif s(\v y),
  &&&
  \int \rho \log\abs{\v y-\v x} \dif s(\v y),
  \\
  \int \rho \frac{(\v y-\v x)}{\abs{\v y - \v x}^2} \dif s(\v y),
  &&&
  \int \frac{\v f\cdot(\v y-\v x) (\v y-\v x) (\v y-\v x)\cdot\v\nhat}{\abs{\v y - \v x}^4} \dif s(\v y) .
\end{align}
Let us use the following identifications between vectors in
$\mathbb R^2$ and points in $\mathbb C$: $\v x=z$, $\v y=\tau$,
$\v\nhat=\nu$, $\gamma(t)=\v g(t)$, and $\v f=\ff$. Our rewrites use
the following definitions and basic results:
\begin{align}
  \v x \cdot \v y &= \rebrac{z\conj\tau}, \\
  \nu &= i \gamma' / \abs{\gamma'}, \\
  \dif s(\v y) &= i \conj\nu \dif\tau = -i \nu \dif\conj\tau.
\end{align}
Throughout, $\rho$ and $\v f$ are assumed to be arbitrary functions
that are smooth and real-valued. The above integrals can then in complex form be written as
\begin{align}
  \int \rho \frac{(\v y-\v x)\cdot\v f}{\abs{\v y - \v x}^2} \dif s(\v y)
  &= -\Im \int \frac{\rho \ff \conj\nu\dif\tau }{\tau - z},  \label{eq:rdotfker_cpx}\\
  \int \rho \frac{(\v y-\v x)}{\abs{\v y - \v x}^2} \dif s(\v y)
  % = -i \int \frac{\rho}{(\conj\tau-\conj z)} \nu\dif\conj\tau
  &= \overline{\int \frac{\rho \conj\nu}{(\tau-z)} i \dif\tau} ,\\
  \int \rho \log\abs{\v y-\v x} \dif s(\v y)
  % &= \Re\int \rho \log(\tau-z) \abs{\dif\tau}\\
  &= -\Im\int \rho\conj\nu \log(\tau-z) \dif\tau ,\\
  \int \frac{\v f\cdot(\v y-\v x) (\v y-\v x) (\v y-\v x)\cdot\v\nhat}{\abs{\v y - \v x}^4} \dif s(\v y)
  % &= \int \rebrac{ \frac{\ff}{\tau-z} } \frac{ \rebrac{(\tau-z)\conj\nu} }{\conj\tau - \conj z}  \dif s(\v y)  \\
  % &= \frac{1}{4}\int
  %   \left( \frac{\ff}{\tau-z} + \frac{\conj\ff}{\conj\tau-\conj z}  \right) 
  %   \frac{(\tau-z)\conj\nu + (\conj\tau-\conj z)\nu}{\conj\tau - \conj z}
  %   \dif s(\v y) \\
  % &= \frac{1}{4}\int
  %   \left( 
  %   \frac{\ff\conj\nu + \conj\ff\nu}{\conj\tau - \conj z}
  %   + \frac{\ff \nu}{\tau-z}
  %   + \frac{\conj\ff (\tau-z)\conj\nu}{(\conj\tau - \conj z)^2}
  %   \right) \dif s(\v y) \\
  % &= \frac{1}{4i} \int \frac{(\ff + \conj\ff\nu^2)\dif\conj\tau}{\conj\tau - \conj z} 
  %   -\frac{1}{4i} \int \frac{\ff \dif\tau}{\tau-z}
  %   + \frac{1}{4i} \overline{
  %   \int \frac{\ff (\conj \tau-\conj z) \dif\tau}{(\tau - z)^2}
  %   } \\
  &= \frac{1}{4i}\left[
    \overline{
    \int \frac{ (\conj \tau-\conj z) \ff \dif\tau}{(\tau - z)^2}
    }    
    + \overline{
    \int \frac{(\conj\ff + \ff\conj\nu^2)\dif\tau}{\tau - z}
    }
    - \int \frac{\ff \dif\tau}{\tau-z}
    \right].
  % \\
  % &= \frac{1}{4i} \int \frac{\conj\ff\nu^2\dif\conj\tau}{\conj\tau - \conj z} 
  %   -\frac{1}{2} \int\ff \imbrac{ \frac{\dif\tau}{\tau-z}}
  %   + \frac{1}{4i} \overline{
  %   \int \frac{\ff (\conj \tau-\conj z) \dif\tau}{(\tau - z)^2}
  %   }    
\end{align}
As an example of how these can be used, if we let $\v f=\v\nhat$ in
\eqref{eq:rdotfker_cpx} and use that $\nu\conj\nu=1$, we directly get
the complex form of the double layer kernel of the Laplace equation,
\begin{align}
  \int \rho  \frac{(\v y-\v x)\cdot\v n}{\abs{\v y - \v x}^2} \dif s(\v y) = 
  - \Im \int \frac{\rho \dif\tau }{\tau - z}.
\end{align}

% Stokes pressure
% \begin{align}
%   \int \rho \frac{\v\nhat\abs{\v y - \v x}^2 - 2(\v y-\v x) (\v y-\v x)\cdot\v\nhat}{\abs{\v y - \v x}^4} \dif s(\v y) =
%   % \\
%   % \frac{1}{2} \int 
%   % \frac{1}{\conj\tau-\conj z}
%   % \rebrac{\frac{\nu}{\tau-z}}
%   % \dif s(\v y)
%   % = \\
%   % \frac{1}{2}
%   % \int 
%   % \frac{1}{\conj\tau-\conj z}
%   % \left(\frac{\nu}{\tau-z} + \frac{\conj\nu}{\conj\tau-\conj z}\right)
%   % \dif s(\v y)
%   % = \\
%   % \frac{1}{2}
%   % \int 
%   % \left(\frac{\nu}{(\conj\tau-\conj z)(\tau-z)} + \frac{\conj\nu}{(\conj\tau-\conj z)^2}\right)
%   % \dif s(\v y)
%   % = \\
%   -\overline{ \int \frac{\rho}{ (\tau-z)^2}i \dif\tau }
% \end{align}

\end{appendices}

%\clearpage

\bibliography{lib_local,lib_mendeley}

\begin{thebibliography}{51}
\providecommand{\natexlab}[1]{#1}
\providecommand{\url}[1]{\texttt{#1}}
\expandafter\ifx\csname urlstyle\endcsname\relax
  \providecommand{\doi}[1]{doi: #1}\else
  \providecommand{\doi}{doi: \begingroup \urlstyle{rm}\Url}\fi

\bibitem[af~Klinteberg(2019)]{github-linequad}
L.~af~Klinteberg.
\newblock {Line quadrature library (linequad)}, 2019.
\newblock URL \url{http://github.com/ludvigak/linequad}.

\bibitem[af~Klinteberg and Tornberg(2017)]{AfKlinteberg2016quad}
L.~af~Klinteberg and A.-K. Tornberg.
\newblock {Error estimation for quadrature by expansion in layer potential
  evaluation}.
\newblock \emph{Adv. Comput. Math.}, 43\penalty0 (1):\penalty0 195--234, 2017.
\newblock \doi{10.1007/s10444-016-9484-x}.

\bibitem[af~Klinteberg and Tornberg(2018)]{AfKlinteberg2018}
L.~af~Klinteberg and A.-K. Tornberg.
\newblock {Adaptive Quadrature by Expansion for Layer Potential Evaluation in
  Two Dimensions}.
\newblock \emph{SIAM J. Sci. Comput.}, 40\penalty0 (3):\penalty0 A1225--A1249,
  2018.
\newblock \doi{10.1137/17M1121615}.

\bibitem[Atkinson(1997)]{atkinson}
K.~Atkinson.
\newblock \emph{The numerical solution of integral equations of the second
  kind}.
\newblock Cambridge University Press, 1997.

\bibitem[Aurentz et~al.(2014)Aurentz, Vandebril, and Watkins]{Aurentz2014}
J.~L. Aurentz, R.~Vandebril, and D.~S. Watkins.
\newblock {Fast computation of eigenvalues of companion, comrade, and related
  matrices}.
\newblock \emph{BIT Numer. Math.}, 54\penalty0 (1):\penalty0 7--30, 2014.
\newblock \doi{10.1007/s10543-013-0449-x}.

\bibitem[Barnett et~al.(2015)Barnett, Wu, and Veerapaneni]{Barnett2015}
A.~Barnett, B.~Wu, and S.~Veerapaneni.
\newblock {Spectrally Accurate Quadratures for Evaluation of Layer Potentials
  Close to the Boundary for the 2D Stokes and Laplace Equations}.
\newblock \emph{SIAM J. Sci. Comput.}, 37\penalty0 (4):\penalty0 B519--B542,
  2015.
\newblock \doi{10.1137/140990826}.

\bibitem[Barnett(2014)]{Barnett2014}
A.~H. Barnett.
\newblock {Evaluation of Layer Potentials Close to the Boundary for Laplace and
  Helmholtz Problems on Analytic Planar Domains}.
\newblock \emph{SIAM J. Sci. Comput.}, 36\penalty0 (2):\penalty0 A427--A451,
  2014.
\newblock \doi{10.1137/120900253}.

\bibitem[Barnett(1975)]{Barnett1975}
S.~Barnett.
\newblock {A companion matrix analogue for orthogonal polynomials}.
\newblock \emph{Linear Algebra Appl.}, 12\penalty0 (3):\penalty0 197--202,
  1975.
\newblock \doi{10.1016/0024-3795(75)90041-5}.

\bibitem[Berrut and Trefethen(2004)]{Berrut2004}
J.-P. Berrut and L.~N. Trefethen.
\newblock {Barycentric Lagrange Interpolation}.
\newblock \emph{SIAM Rev.}, 46\penalty0 (3):\penalty0 501--517, 2004.
\newblock \doi{10.1137/S0036144502417715}.

\bibitem[Bj{\"{o}}rck and Pereyra(1970)]{Bjorck1970}
A.~Bj{\"{o}}rck and V.~Pereyra.
\newblock {Solution of Vandermonde Systems of Equations}.
\newblock \emph{Math. Comput.}, 24\penalty0 (112):\penalty0 893, 1970.
\newblock \doi{10.2307/2004623}.

\bibitem[Bruno and Haslam(2007)]{haslamwire}
O.~P. Bruno and M.~C. Haslam.
\newblock Regularity theory and superalgebraic solvers for wire antenna
  problems.
\newblock \emph{SIAM J.\ Sci.\ Comput.}, 29\penalty0 (4):\penalty0 1375--1402,
  2007.

\bibitem[Carvalho et~al.(2018)Carvalho, Khatri, and Kim]{khatri2d}
C.~Carvalho, S.~Khatri, and A.~D. Kim.
\newblock Asymptotic analysis for close evaluation of layer potentials.
\newblock \emph{J.\ Comput.\ Phys.}, 355:\penalty0 327--341, 2018.

\bibitem[Cortez(2018)]{Cortez2018}
R.~Cortez.
\newblock {Regularized Stokeslet segments}.
\newblock \emph{J. Comput. Phys.}, 375:\penalty0 783--796, 2018.
\newblock \doi{10.1016/j.jcp.2018.08.055}.

\bibitem[Davis(1974)]{Da74}
P.~J. Davis.
\newblock \emph{The {S}chwarz function and its applications}.
\newblock The Mathematical Association of America, Buffalo, N. Y., 1974.
\newblock The Carus Mathematical Monographs, No. 17.

\bibitem[Donaldson and Elliott(1972)]{Donaldson1972}
J.~D. Donaldson and D.~Elliott.
\newblock {A Unified Approach to Quadrature Rules with Asymptotic Estimates of
  Their Remainders}.
\newblock \emph{SIAM J. Numer. Anal.}, 9\penalty0 (4):\penalty0 573--602, 1972.
\newblock \doi{10.1137/0709051}.

\bibitem[Gautschi and Inglese(1988)]{gautschi88}
W.~Gautschi and G.~Inglese.
\newblock Lower bounds for condition number of {V}andermonde matrices.
\newblock \emph{Numer.\ Math.}, 52:\penalty0 241--250, 1988.

\bibitem[G\"{o}tz(2000)]{gotz00}
T.~G\"{o}tz.
\newblock Interactions of fibers and flow: asymptotics, theory and numerics,
  2000.
\newblock Ph.D. Thesis, University of Kaiserslautern, Germany.

\bibitem[Hao et~al.(2014)Hao, Barnett, Martinsson, and Young]{Hao2014}
S.~Hao, A.~H. Barnett, P.~G. Martinsson, and P.~Young.
\newblock {High-order accurate methods for Nystr{\"{o}}m discretization of
  integral equations on smooth curves in the plane}.
\newblock \emph{Adv. Comput. Math.}, 40\penalty0 (1):\penalty0 245--272, 2014.
\newblock \doi{10.1007/s10444-013-9306-3}.

\bibitem[Helsing(2009)]{Helsing2009}
J.~Helsing.
\newblock {Integral equation methods for elliptic problems with boundary
  conditions of mixed type}.
\newblock \emph{J. Comput. Phys.}, 228\penalty0 (23):\penalty0 8892--8907,
  2009.
\newblock \doi{10.1016/j.jcp.2009.09.004}.

\bibitem[Helsing and Holst(2015{\natexlab{a}})]{Helsing2015}
J.~Helsing and A.~Holst.
\newblock {Variants of an explicit kernel-split panel-based Nystr{\"{o}}m
  discretization scheme for Helmholtz boundary value problems}.
\newblock \emph{Adv. Comput. Math.}, 41\penalty0 (3):\penalty0 691--708,
  2015{\natexlab{a}}.
\newblock \doi{10.1007/s10444-014-9383-y}.

\bibitem[Helsing and Holst(2015{\natexlab{b}})]{helsing_helm}
J.~Helsing and A.~Holst.
\newblock Variants of an explicit kernel-split panel-based {N}ystr\"om
  discretization scheme for {H}elmholtz boundary value problems.
\newblock \emph{Adv. Comput. Math.}, 41\penalty0 (3):\penalty0 691--708,
  2015{\natexlab{b}}.

\bibitem[Helsing and Jiang(2018)]{helsingjiang}
J.~Helsing and S.~Jiang.
\newblock On integral equation methods for the first {D}irichlet problem of the
  biharmonic and modified biharmonic equations in nonsmooth domains.
\newblock \emph{SIAM J.\ Sci.\ Comput.}, 40\penalty0 (4):\penalty0
  A2609--A2630, 2018.

\bibitem[Helsing and Karlsson(2014)]{helsing_axi}
J.~Helsing and A.~Karlsson.
\newblock An explicit kernel-split panel-based {Nystr\"om} scheme for integral
  equations on axially symmetric surfaces.
\newblock \emph{J. Comput. Phys.}, 272:\penalty0 686--703, 2014.

\bibitem[Helsing and Ojala(2008{\natexlab{a}})]{Helsing2008}
J.~Helsing and R.~Ojala.
\newblock {On the evaluation of layer potentials close to their sources}.
\newblock \emph{J. Comput. Phys.}, 227\penalty0 (5):\penalty0 2899--2921,
  2008{\natexlab{a}}.
\newblock \doi{10.1016/j.jcp.2007.11.024}.

\bibitem[Helsing and Ojala(2008{\natexlab{b}})]{Helsing2008a}
J.~Helsing and R.~Ojala.
\newblock {Corner singularities for elliptic problems: Integral equations,
  graded meshes, quadrature, and compressed inverse preconditioning}.
\newblock \emph{J. Comput. Phys.}, 227\penalty0 (20):\penalty0 8820--8840,
  2008{\natexlab{b}}.
\newblock \doi{10.1016/j.jcp.2008.06.022}.

\bibitem[Ho et~al.(2019)Ho, Leiderman, and Olson]{Ho2019}
N.~Ho, K.~Leiderman, and S.~Olson.
\newblock {A three-dimensional model of flagellar swimming in a Brinkman
  fluid}.
\newblock \emph{J. Fluid Mech.}, 864:\penalty0 1088--1124, 2019.
\newblock \doi{10.1017/jfm.2019.36}.

\bibitem[Johnson(1980)]{johnson80}
R.~E. Johnson.
\newblock An improved slender-body theory for stokes flow.
\newblock \emph{J.\ Fluid Mech.}, 99\penalty0 (2):\penalty0 411--431, 1980.

\bibitem[Johnson(2012)]{adjoint}
S.~G. Johnson.
\newblock Notes on adjoint methods, 2012.
\newblock {\tt https://math.mit.edu/~stevenj/18.336/adjoint.pdf}.

\bibitem[Keller and Rubinow(1976)]{keller76}
J.~B. Keller and S.~I. Rubinow.
\newblock Slender-body theory for slow viscous flow.
\newblock \emph{J.\ Fluid Mech.}, 75\penalty0 (4):\penalty0 705--714, 1976.

\bibitem[Kl{\"{o}}ckner et~al.(2013)Kl{\"{o}}ckner, Barnett, Greengard, and
  O'Neil]{Klockner2013}
A.~Kl{\"{o}}ckner, A.~Barnett, L.~Greengard, and M.~O'Neil.
\newblock {Quadrature by expansion: A new method for the evaluation of layer
  potentials}.
\newblock \emph{J. Comput. Phys.}, 252:\penalty0 332--349, 2013.
\newblock \doi{10.1016/j.jcp.2013.06.027}.

\bibitem[Koens and Lauga(2018)]{Koens2018}
L.~Koens and E.~Lauga.
\newblock {The boundary integral formulation of Stokes flows includes
  slender-body theory}.
\newblock \emph{J. Fluid Mech.}, 850:\penalty0 R1, 2018.
\newblock \doi{10.1017/jfm.2018.483}.

\bibitem[Kress(2014)]{Kress2014}
R.~Kress.
\newblock \emph{{Linear Integral Equations}}.
\newblock Springer, New York, third edition, 2014.
\newblock ISBN 978-1-4614-9593-2.
\newblock \doi{10.1007/978-1-4614-9593-2}.

\bibitem[Millar(1986)]{Mi86}
R.~F. Millar.
\newblock Singularities and the {R}ayleigh hypothesis for solutions to the
  {H}elmholtz equation.
\newblock \emph{IMA J. Appl. Math.}, 37\penalty0 (2):\penalty0 155--171, 1986.

\bibitem[Mori et~al.(2018)Mori, Ohm, and Spirn]{Mori2018}
Y.~Mori, L.~Ohm, and D.~Spirn.
\newblock {Theoretical justification and error analysis for slender body
  theory}.
\newblock \emph{arXiv:1807.00178 [math.AP]}, 2018.

\bibitem[Mori et~al.(2019)Mori, Ohm, and Spirn]{Mori2019}
Y.~Mori, L.~Ohm, and D.~Spirn.
\newblock {Theoretical justification and error analysis for slender body theory
  with free ends}.
\newblock \emph{arXiv:1901.11456 [math.AP]}, 2019.

\bibitem[Nazockdast et~al.(2017)Nazockdast, Rahimian, Zorin, and
  Shelley]{naz17}
E.~Nazockdast, A.~Rahimian, D.~Zorin, and M.~J. Shelley.
\newblock A fast platform for simulating semi-flexible fiber suspensions
  applied to cell mechanics.
\newblock \emph{J.\ Comput.\ Phys.}, 329:\penalty0 173--209, 2017.

\bibitem[Ojala(2012)]{ojalalap}
R.~Ojala.
\newblock A robust and accurate solver of {L}aplace's equation with general
  boundary conditions on general domains in the plane.
\newblock \emph{J. Comput. Math.}, 30\penalty0 (4):\penalty0 433--448, 2012.

\bibitem[Ojala and Tornberg(2015)]{Ojala2015}
R.~Ojala and A.-K. Tornberg.
\newblock {An accurate integral equation method for simulating multi-phase
  Stokes flow}.
\newblock \emph{J. Comput. Phys.}, 298:\penalty0 145--160, 2015.
\newblock \doi{10.1016/j.jcp.2015.06.002}.

\bibitem[Pan(2016)]{pan16}
V.~Y. Pan.
\newblock How bad are {V}andermonde matrices?
\newblock \emph{SIAM J.\ Matrix Anal.\ Appl.}, 37\penalty0 (2):\penalty0
  676--694, 2016.

\bibitem[P{\'e}rez-Arancibia et~al.(2019)P{\'e}rez-Arancibia, Faria, and
  Turc]{perezlap2d3d}
C.~P{\'e}rez-Arancibia, L.~M. Faria, and C.~Turc.
\newblock Harmonic density interpolation methods for high-order evaluation of
  {L}aplace layer potentials in {2D} and {3D}.
\newblock \emph{J.\ Comput.\ Phys.}, 376:\penalty0 411--434, 2019.
\newblock \doi{https://doi.org/10.1016/j.jcp.2018.10.002}.

\bibitem[Press et~al.(2007)Press, Teukolsky, Vetterling, and
  Flannery]{Press2007}
W.~H. Press, S.~A. Teukolsky, W.~T. Vetterling, and B.~P. Flannery.
\newblock \emph{{Numerical Recipes: The Art of Scientific Computing}}.
\newblock Cambridge University Press, New York, 3rd edition, 2007.
\newblock ISBN 9780521880688.

\bibitem[Rahimian et~al.(2017)Rahimian, Barnett, and Zorin]{Rahimian2016}
A.~Rahimian, A.~Barnett, and D.~Zorin.
\newblock {Ubiquitous evaluation of layer potentials using Quadrature by
  Kernel-Independent Expansion}.
\newblock \emph{BIT Numer. Math.}, 2017.
\newblock \doi{10.1007/s10543-017-0689-2}.

\bibitem[Scharstein and Wilson(2005)]{laptoroid}
R.~W. Scharstein and H.~B. Wilson.
\newblock Electrostatic excitation of a conducting toroid: Exact solution and
  thin-wire approximation.
\newblock \emph{Electromagnetics}, 25\penalty0 (1):\penalty0 1--19, 2005.

\bibitem[Shapiro(1992)]{Shapiro92}
H.~S. Shapiro.
\newblock \emph{The {S}chwarz function and its generalization to higher
  dimensions}.
\newblock Wiley-Interscience, 1992.
\newblock University of Arkansas Lecture Notes in the Mathematical Sciences,
  Volume 9.

\bibitem[Sloan(1992)]{sloan92}
I.~H. Sloan.
\newblock Error analysis of boundary integral methods.
\newblock \emph{Acta Numer.}, 1:\penalty0 287--339, 1992.

\bibitem[Tornberg and Gustavsson(2006)]{Tornberg2006}
A.-K. Tornberg and K.~Gustavsson.
\newblock {A numerical method for simulations of rigid fiber suspensions}.
\newblock \emph{J. Comput. Phys.}, 215\penalty0 (1):\penalty0 172--196, 2006.
\newblock \doi{10.1016/j.jcp.2005.10.028}.

\bibitem[Tornberg and Shelley(2004)]{Tornberg2004}
A.-K. Tornberg and M.~J. Shelley.
\newblock {Simulating the dynamics and interactions of flexible fibers in
  Stokes flows}.
\newblock \emph{J. Comput. Phys.}, 196\penalty0 (1):\penalty0 8--40, 2004.
\newblock \doi{10.1016/j.jcp.2003.10.017}.

\bibitem[Trefethen(2012)]{Trefethen2013}
L.~N. Trefethen.
\newblock \emph{{Approximation Theory and Approximation Practice}}.
\newblock SIAM, Philadelphia, PA, USA, 2012.
\newblock ISBN 9781611972399.

\bibitem[Wala and Kl{\"o}ckner(2018)]{walaqbx2d}
M.~Wala and A.~Kl{\"o}ckner.
\newblock A fast algorithm with error bounds for {Q}uadrature by {E}xpansion.
\newblock \emph{J.\ Comput.\ Phys.}, 374:\penalty0 135--162, 2018.
\newblock \doi{https://doi.org/10.1016/j.jcp.2018.05.006}.

\bibitem[Walsh(1935)]{walsh35}
J.~L. Walsh.
\newblock \emph{Interpolation and approximation by Rational Functions in the
  Complex Doma in}.
\newblock Volume 20 of American Mathematical Society: Colloquium publications.
  AMS, 1935.

\bibitem[Wu et~al.(2019)Wu, Zhu, Barnett, and Veerapaneni]{wu2019}
B.~Wu, H.~Zhu, A.~H. Barnett, and S.~V. Veerapaneni.
\newblock Solution of {S}tokes flow in complex nonsmooth {2D} geometries via a
  linear-scaling high-order adaptive integral equation scheme, 2019.

\end{thebibliography}
\bibliographystyle{abbrvnat_mod}

\end{document}